\newcommand*{\LargerCdot}{\raisebox{-0.25ex}{\scalebox{1.5}{$\cdot$}}}
\newtheorem{theorem}{Theorem}
\newtheorem{corollary}[theorem]{Corollary}
\newtheorem{definition}[theorem]{Definition}
\newtheorem{example}[theorem]{Example}
\newtheorem{lemma}[theorem]{Lemma}
\newtheorem{proposition}[theorem]{Proposition}
\newtheorem{remark}[theorem]{Remark}
\newenvironment{proof}[1][Proof]{\noindent\textbf{#1.} }{\ \rule{0.5em}{0.5em}}
\begin{document}
\title{On the integrability of the isotropic almost complex structures and harmonic unit vector fields}

\author {Amir Baghban\\
Department of Mathematics, Faculty of Science\\
Azarbaijan Shahid Madani University\\
Tabriz 53751 71379, Iran\\
E-mail: amirbaghban87@gmail.com
\and 
Esmail Abedi\\
Department of Mathematics, Faculty of Science\\
Azarbaijan Shahid Madani University\\
Tabriz 53751 71379, Iran\\
E-mail: esabedi@azaruniv.ac.ir}

\maketitle


\renewcommand{\thefootnote}{}

\footnote{2010 \emph{Mathematics Subject Classification}: Primary 53C43; Secondary 53C15.}

\renewcommand{\thefootnote}{\arabic{footnote}}
\setcounter{footnote}{0}
\begin{abstract}
Aguilar introduced isotropic almost complex structures $J_{\delta , \sigma}$ on the tangent bundle of a Riemannian manifold $(M,g)$. In this  paper, some results will be obtained on the integrability of these structures. These structures with the Liouville $1$-form define a class of Riemannian metrics $g_{\delta , \sigma}$ on $TM$ which are a generalization of the Sasaki metric. Moreover, the notion of a harmonic unit vector field is introduced with respect  to these  metrics like as the Sasaki metric and the necessary and sufficient conditions for a unit vector field to be a harmonic unit vector field are obtained.

\textbf{Keywords}: complex structures, energy functional, isotropic almost complex structure, unit tangent bundle, variational problem, tension field
\end{abstract}

\section{Introduction}
In this paper, first the isotropic almost complex structures will be studied and then harmonic unit vector fields will be investigated.

Aguilar \cite{aguilar} introduced a class of almost complex structures on the tangent bundle of an arbitrary Riemannian manifold and proved that the necessary condition for integrability of such structures is that the base manifold has constant sectional curvature. Moreover, he proved the existence of special cases  when the base manifold is a space form. These special cases induce a class of $g$-natural metrics on $TM$. So it is natural to ask \textbf{is there any other integrable  structures on a space form?} The authors asked this question in the Mathoverflow and R. Bryant Proved there are many other cases. Bryant Characterized the  integrable   isotropic almost complex structures on $T\mathbb{R}^n$ and $TS^n$ and then answered to the stated problem. 

The authors presented  other equivalents to the integrability of isotropic almost complex structures on $T\mathbb{R}^n$ and $TS^n$ based on PDE's.

Using natural lifts of the Riemannian metric $g$ from the base manifold $M$ to the total space $TM$ of the tangent bundle, some new interesting geometric structures were studied (e.g. \cite{ dombrowski, S, PHR}). Maybe the best known Riemannian metric on the tangent bundle is the Sasaki metric introduced
by Sasaki in 1958 (see \cite{S}).  

Let $(M,g)$ be a compact Riemannian manifold and $(TM,g_s)$ be its tangent bundle equipped with the Sasaki metric. Moreover, suppose $(S(M),i^*g_s)$ is the unit tangent bundle of $(M,g)$ where $i:S(M) \longrightarrow TM$ is the inclusion map and $i^*g_s$ is the induced Sasaki metric on the unit tangent bundle. Denote by $\Gamma(TM)$ the set of all smooth vector fields on $M$. Moreover, let $\nabla$ be the Levi-Civita connection of $g$ and $ \Delta _g X$ be the rough Laplacian of vector field $X$ with respect to metric $g$.

Since, every vector field defines a map from $(M,g)$ to $(TM,g_s)$, it is natural to investigate the harmonicity of maps defined by vector fields.

Nouhaud \cite{dragomir} deduced that a vector field $X$ defines a harmonic map from $(M,g)$ to $(TM,g_s)$ if and only if $X$ is a parallel vector field. She found the expression of the Dirichlet energy associated to the vector field $X$ as
\begin{align*}
E(X)=\frac{n}{2}vol(M) + \frac{1}{2}\int _M ||\nabla X||^2 d\text{vol}(g),
\end{align*}
where $vol(M)$ is the volume of $M$ with respect to the metric $g$ and $||\nabla X||$ is the norm of $\nabla X$ as a $(1,1)$-tensor. She proved that parallel vector fields are the critical points of the Dirichlet energy defined from $C^{\infty}((M,g),(TM,g_s))$ to $\mathbb{R}^+$ by using the stated formula for $E(X)$. 

Ishihara \cite{dragomir} investigated that under certain conditions a map defined by a vector field from $(M,g)$ to $(TM,g_s)$ is a harmonic map. He calculated the tension field associated to a map from $(M,g)$ to $(TM,g_s)$ defined by a vector field and showed that the necessary and sufficient conditions for the harmonicity of this map is the vanishing of its Laplacian(We know that the conditions $\nabla X=0$ and $\Delta _g X=0$ are equivalent for an arbitrary vector field $X$ on a compact Riemannian manifold $(M, g)$).
 
 Medrano \cite{gil} investigated the critical points of the energy functional $E:\Gamma (TM) \to \mathbb{R}^+$ where $E$ is the restrected Dirichlet energy functional to the vector fields on a compact Riemannian manifold $(M,g)$. She proved that such vector fields are again parallel vector fields i.e., $\nabla X=0$.

Wood \cite{wood} introduced the notion of harmonic unit vector fields on a compact Riemannian manifold $(M,g)$ by restricting the Dirichlet energy functional to the unit vector fields and called the critical points of such functional, harmonic unit vector fields. Recall that when the Dirichlet energy functional is restricted to the unit vector fields, the vanishing of $ \nabla X$ ensures that the unit vector field $X$ is a harmonic unit vector field. Since, this is an strong condition for a unit vector field to be a harmonic unit vector field, it is natural to investigate the harmonicity of unit vector fields. Wood \cite{wood} demonstrated that a unit vector field $X$ is a harmonic unit vector field if and only if $\Delta _g X=||\nabla X||^2 X$.

The contributions on the harmonicity of maps defined by vector fields is not limited to the tangent bundles equipped with the Sasaki metric. Abbassi et al. \cite{abbassii} and  \cite{abbassi} studied the problem of determining of harmonicity of such maps with respect to the g-natural metrics. Dragomir and Perrone \cite{dragomir} introduced the problem of studying the harmonic unit vector fields when $TM$ equipped with the Riemannian metric $g_{\delta , \sigma}$ induced by an arbitrary isotropic almost complex structure $J_{\delta , \sigma }$. In this paper the harmonicity of a map defined by a vector field $X:(M,g)\longrightarrow (TM,g_{\delta ,\sigma})$ will be investigated.

The rest of the paper is organized as follows: Section 2 gives some preliminaries of tangent bundle and some notifications needed for integrability of  almost complex structures. In Section 3, some propositions on integrability of isotropic almost complex structures are resulted. It is notable that one of them is based on R. Bryants answer on Mathoverflow
\footnote{mathoverflow.net/questions/230574/solutions-of-equations-characterizing-a-complex-structure}
\footnote{mathoverflow.net/questions/234772/existence-of-non-constant-solutions-for-this-equations}
. In Section 4, we study the Riemannian metrics $g_{\delta , \sigma}$ on $TM$  and in particular, we calculate the Levi-Civita connection of these metrics. Section 5 is devoted to  achieve the necessary and sufficient conditions for a unit vector field to be a critical point of the restricted Dirichlet energy function to the unit vector fields.

\section{Preliminaries}

Assume $(M,g)$ is an n-dimensional Riemannian manifold and $\nabla$ represents the Levi-Civita connection of $g$. Moreover, let $\pi:TM \longrightarrow M$ be its tangent bundle and $K:TTM \longrightarrow TM$ be the connection map with respect to $\nabla$ where $\pi$ is the natural projection. The tangent bundle of $TM (TTM)$ can be split to vertical and horizontal vector sub-bundles $\mathcal{V}$ and $\mathcal{H}$, respectively, i.e., for every $v \in TM$, $T_vTM=\mathcal{V}_v  \oplus \mathcal{H}_v$. These sub-bundles have  the following properties
\begin{itemize}
\item $ \pi_{*v}\mid_{\mathcal{H} _v}: {\mathcal{H}}_v \longrightarrow T_{\pi(v)}M $ is an isomorphism,
\item $ K \mid_{{\mathcal{V}}_v }: \mathcal{V}_v \longrightarrow T_{\pi(v)} M $ is an isomorphism;
\end{itemize}
Where $\pi _{*v}$ is the differential map of $\pi$ at $v\in TM$.

Note that $X^v$ and $X^h$ , standing for the vertical and horizontal lifts of $X\in \Gamma (TM)$  , are the vector fields  on $TM$ defined by
\begin{itemize}
\item $X^v_u=(K \mid_{{\mathcal{V}}_u })^{-1}X({\pi (u)} )\in \mathcal{V}_u$'
\item $X^h_u=(\pi_{*v}\mid_{\mathcal{H} _u})^{-1}X({\pi (u)})  \in \mathcal{H}_u$;
\end{itemize}
$\forall \:$ $u \in TM$.

The Lie bracket of the horizontal and vertical vector fields at $u \in TM$ are expressed as follows
\begin{align}
[X^h,Y^h](u)&=[X,Y]^h_u-(R(X,Y)u)^v_u, \label{1}\\
[X^h,Y^v](u)&=(\nabla _X Y)^v_u,\label{2}\\
[X^v,Y^v](u)&=0(u),\label{3}
\end{align}
Where $0$ is the zero vector field on $TM$. Moreover, if we consider the vector field $X:M\longrightarrow TM$ as a map between manifolds, its derivative $X_*$ at a point $p$ in $M$ is given by
\begin{align}
X_{*p}(V)=V^h_{X(p)}+(\nabla _VX)^v_{X(p)} \hspace{1cm}\forall \: V\in \Gamma(TM).
\end{align}

Now, let $(M,J)$ be an almost complex manifold and $T^C(M)$ be the complexfication of $TM$. Define the spaces $T_x^{(0,1)}(M)$ and $T_x^{(1,0)}(M)$ of $T_x^C(M)$ as 
$$T_x^{(0,1)}(M)=\lbrace X_x+\sqrt{-1}J_xX_x|X(x)\in T_x(M)\rbrace,$$
and
$$   T_x^{(1,0)}(M)=\lbrace X_x-\sqrt{-1}J_xX_x|X(x)\in T_x(M)\rbrace . $$
It is a well known fact that $J$ is an integrable structure if and only if for all sections $A,B\in \Gamma (T^{(0,1)}(M))=\cup _{x\in M}T_x^{(0,1)}(M)$ we have $[A,B]\in \Gamma (T_x^{(0,1)}(M))$;  equivalently, for any two arbitrary members $\zeta _1 , \zeta _2$ of the dual space of $T^{(1,0)}(M)$\\$= \cup _{x\in M}T_x^{(1,0)}(M)$ (where denoted by $T^{(1,0)}(M)^*$) we have 
$$\zeta _1 \wedge \zeta _2 \in \bigwedge ^2  T^{(1,0)}(M)^*.$$
\begin{definition}
	Let $\eta , \eta ^1,...,\eta ^n$ be $1$-forms on a differentiable manifold $M$. We say that 
	$$d\eta \equiv 0 \qquad \text{mod} \:\lbrace \eta ^1,...,\eta ^n \rbrace,$$
	if and only if $$d\eta = \sum _{i,j}f_{ij}\eta ^i\wedge \eta ^j,$$ for some functions $f_{ij}$ on $M$.
\end{definition}
So, if $\zeta ^1,...,\zeta ^n$ be locally $(1,0)$-forms generating $\Gamma (T^{(1,0)}M^*)$, then $J$ is integrable if and only if 
$$ d\zeta \equiv 0  \qquad mod \lbrace \zeta ^1,...,\zeta ^n \rbrace  ,\qquad \forall \: \zeta \in \Gamma (T^{(1,0)}M^*) .$$

\section{Isotropic almost complex structures}
Isotropic almost complex structures are a generalized type of the natural almost complex structure $J_{1,0}:TTM\longrightarrow TTM $ given by
\begin{align*}
J_{1,0}(X^h)=X^v ,\hspace{1cm}J_{1,0}(X^v)=-X^h \hspace{1cm} \forall X \in \Gamma (TM).
\end{align*}
The isotropic almost complex structures determine an almost kahler metric
which kahler 2-form is the pullback of the canonical symplectic form on $T^*M$ to $TM$ via $\mathfrak{b} :TM \longrightarrow T^*M$. Moreover, Aquilar proved \cite{aguilar} that there is an isotropic complex structure on an open subset $\mathcal{A} \subset TM$ if and only if $\pi (\mathcal{A})$ is of constant sectional curvature.

\begin{definition}\cite{aguilar}\label{1000}
An almost complex structure $J$ on $TM$ is said to be isotropic with respect to the
Riemannian metric on $M$, if there are smooth functions $\alpha, \delta, \sigma :TM \longrightarrow \mathbb{R}$ such that $\alpha \delta - \sigma^2 =1$ and
\begin{align}
 JX^h=\alpha X^v + \sigma X^h, \hspace{0.5cm} JX^v=-\sigma X^v - \delta X^h \hspace{0.5cm} \forall X \in \Gamma (TM).\label{4}
\end{align}
\end{definition}
Hereafter, we will represent the isotropic almost complex structure associated to the maps $\alpha , \delta$ and $\sigma$ by $J_{\delta , \sigma}$.

 Suppose $(M,g)$ has constant sectional curvature $k$. Aquilar\cite{aguilar} proved that $J_{\delta , \sigma}$ is an integrable structure on an open subset $\mathcal{A} \subset TM$ if and only if the following equation holds 
\begin{align}\label{90000}
d\sigma +k \delta \Theta- \sqrt{-1}(1- \sqrt{-1}\sigma) \delta ^{-1}d\delta \equiv  0 \hspace{1cm}mod\hspace{2mm}\lbrace u^1,...,u^n\rbrace ,
\end{align}
where $\lbrace u^1,...,u^n  \rbrace $ are $1$-forms which generate the space of $(1,0)$-forms induced by $J_{\delta , \sigma}$ on $\mathcal{A} \subset TM$. When  $\alpha , \delta$ and $\sigma$ are functions of $E(u)=\frac{1}{2}g(u,u)$ then the above equation gives the following solutions for  $ \delta$ and $\sigma$ \cite{aguilar}
\begin{flalign}
&\delta ^{-1}=\sqrt{2kE+b}, \hspace{1cm}\sigma =0,& \label{90001}\\
&\delta ^{-2}=\frac{1}{2}\lbrace 2kE + b +\sqrt{(2kE+b)^2+4a^2k^2} \rbrace \hspace{1cm}\sigma =ak\delta ^2, a\neq 0,&\label{90008}
\end{flalign}
where $a,b \in \mathbb{R}$.

When $k=0$ we  prove that the equation (\ref{90000}) is equivalent to the $n$-complex equations stated in  the following Proposition.

\begin{proposition}\label{90013}
Let $(\mathbb{R}^n,\langle .,.\rangle )$ be the Euclidean space and $J_{\delta , \sigma}$ be an almost complex structure introduced as before. If we suppose $z=u+iv$ as a complex function on $\mathbb{R}^{2n}=T\mathbb{R}^n$ where $u,v$ are defined by $v=\frac{1}{\delta}$ and $u=\frac{\sigma}{\delta}$, respectively, then $J_{\delta , \sigma}$ is integrable if and only if 
\begin{align}
\frac{\partial z}{\partial x^l}+z\frac{\partial z}{\partial y^l}=0 \qquad \forall \:l, \:1\leq l\leq n,
\end{align}
where $(x^1,...,x^n)$ and $(x^1,...,x^n,y^1,...,y^n)$ are the natural coordinate systems for $\mathbb{R}^n$ and $\mathbb{R}^{2n}=T\mathbb{R}^n$, respectively.

\end{proposition}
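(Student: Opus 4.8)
The plan is to work entirely in the natural coordinates $(x^1,\dots,x^n,y^1,\dots,y^n)$ on $T\mathbb{R}^n$ and to translate the coordinate-free condition (\ref{90000}) into a statement about the single complex function $z$. Since $(\mathbb{R}^n,\langle\cdot,\cdot\rangle)$ is flat, all Christoffel symbols vanish, so the horizontal and vertical lifts of $\partial/\partial x^l$ are simply $(\partial/\partial x^l)^h=\partial/\partial x^l$ and $(\partial/\partial x^l)^v=\partial/\partial y^l$. First I would substitute these into the defining relations (\ref{4}) to obtain the coordinate form
\begin{align*}
J\frac{\partial}{\partial x^l}=\sigma\frac{\partial}{\partial x^l}+\alpha\frac{\partial}{\partial y^l},\qquad J\frac{\partial}{\partial y^l}=-\delta\frac{\partial}{\partial x^l}-\sigma\frac{\partial}{\partial y^l},
\end{align*}
which is block diagonal in the index $l$. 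Because Aguilar's criterion (\ref{90000}) is available, it suffices to prove that, for $k=0$, this criterion is equivalent to the stated system of PDE's.

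Next I would determine the type decomposition of $J$. A direct check, using $\alpha\delta-\sigma^2=1$, shows that the $-\sqrt{-1}$-eigenvectors, i.e. the sections of $T^{(0,1)}$, are spanned by
\begin{align*}
\overline{W_l}=(1+\sqrt{-1}\,\sigma)\frac{\partial}{\partial x^l}+\sqrt{-1}\,\alpha\frac{\partial}{\partial y^l},\qquad 1\le l\le n,
\end{align*}
and that the $(1,0)$-forms $u^1,\dots,u^n$ generating $\Gamma(T^{(1,0)}(T\mathbb{R}^n)^*)$ may be taken proportional to $\alpha\,dx^l+(\sqrt{-1}-\sigma)\,dy^l$. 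Since a complex $1$-form is of type $(1,0)$ precisely when it annihilates every section of $T^{(0,1)}$, the relation $\omega\equiv0\ \mathrm{mod}\ \{u^1,\dots,u^n\}$ for a $1$-form $\omega$ is equivalent to $\omega(\overline{W_l})=0$ for every $l$.

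The heart of the argument is to recognize the left-hand side of (\ref{90000}) with $k=0$ as an exact form up to the nonvanishing factor $\delta$. Writing $z=(\sigma+\sqrt{-1})/\delta$, which is exactly $u+\sqrt{-1}\,v$ for the stated $u,v$, and using $\sqrt{-1}(1-\sqrt{-1}\,\sigma)=\sigma+\sqrt{-1}$, a short computation gives
\begin{align*}
d\sigma-\sqrt{-1}(1-\sqrt{-1}\,\sigma)\delta^{-1}d\delta=d\sigma-(\sigma+\sqrt{-1})\delta^{-1}d\delta=\delta\,dz.
\end{align*}
As $\delta\neq0$, equation (\ref{90000}) with $k=0$ is therefore equivalent to $dz\equiv0\ \mathrm{mod}\ \{u^1,\dots,u^n\}$, that is, to $dz$ being of type $(1,0)$.

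Finally I would impose $dz(\overline{W_l})=0$, which reads
\begin{align*}
(1+\sqrt{-1}\,\sigma)\frac{\partial z}{\partial x^l}+\sqrt{-1}\,\alpha\frac{\partial z}{\partial y^l}=0,
\end{align*}
and then collapse the coefficient. Using $\alpha\delta=1+\sigma^2$ one verifies the identity $\sqrt{-1}\,\alpha/(1+\sqrt{-1}\,\sigma)=(\sigma+\sqrt{-1})/\delta=z$, after which the displayed equation becomes exactly $\partial z/\partial x^l+z\,\partial z/\partial y^l=0$ for each $l$; since every step is an equivalence, this yields the asserted iff. I expect the main difficulty to be bookkeeping rather than conceptual: one must fix the correct normalization and type of the forms $u^l$, interpret the $\mathrm{mod}$-relation correctly for a $1$-form, and the whole reduction hinges on the single algebraic identity $\alpha\delta-\sigma^2=1$, which enters twice—once to confirm that $\overline{W_l}$ genuinely spans $T^{(0,1)}$, and once to contract the coefficient into $z$.
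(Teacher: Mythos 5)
Your proposal is correct, and all the algebra checks out (the identities $\sqrt{-1}\alpha/(1+\sqrt{-1}\sigma)=(\sigma+\sqrt{-1})/\delta=z$ and $d\sigma-(\sigma+\sqrt{-1})\delta^{-1}d\delta=\delta\,dz$ are both right). The route differs from the paper's in one substantive way: you take Aguilar's criterion (\ref{90000}) with $k=0$ as the starting point and show its left-hand side is the exact form $\delta\,dz$, then test $dz$ against an explicit $(0,1)$ frame $\overline{W_l}$; the paper instead never invokes (\ref{90000}) in the proof, but works directly from the Newlander--Nirenberg-type condition $d\zeta\equiv 0\ \mathrm{mod}\,\{\zeta^1,\dots,\zeta^n\}$ of Section 2, identifies the $(1,0)$-forms as $w^l=dy^l-z\,dx^l$ (your $u^l$ up to the nonvanishing factor $\sqrt{-1}-\sigma$), computes $dw^l=-dz\wedge dx^l$, and reduces modulo $\{w^l\}$ using $dy^l\equiv z\,dx^l$. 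Both arguments converge on the same pivot --- integrability holds iff $dz$ is of type $(1,0)$ --- and the final expansion into $\partial z/\partial x^l+z\,\partial z/\partial y^l=0$ is identical. Your version buys a literal proof of the sentence preceding the proposition (that (\ref{90000}) with $k=0$ is equivalent to the PDE system), and the observation that the Aguilar $1$-form is $\delta\,dz$ is a nice simplification; the paper's version is self-contained in that it does not rely on Aguilar's computation, only on the identification of the $(1,0)$-forms. One small point you share with the paper: the spanning claim for $\overline{W_l}$ (equivalently for $u^l$) is asserted rather than written out, but it is a one-line verification from $\alpha\delta-\sigma^2=1$, as you indicate.
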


\begin{proof}
	It is easy to check that the $1$-forms  $u^l=\sqrt{-1}\delta(dy^l-zdx^l)$, $1\leq l\leq n$ span the space of $(1,0)$-forms induced by $J_{\delta , \sigma}$. As the space spanned by $u^l$ is the same space spanned by $w^l=dy^l-zdx^l$, So $J_{\delta , \sigma}$ is integrable if and only if 
	\begin{align}
	dw^l\equiv 0 \hspace{1cm}mod\lbrace w^1,...,w^n\rbrace .\label{90010}
	\end{align}
	Since $dw^l=-dz\wedge dx^l$, the equation (\ref{90010}) can only happen if
	\begin{align}
	dz\equiv 0 \hspace{1cm}mod\lbrace w^1,...,w^n\rbrace .\label{90011}
	\end{align}
	But 
	\begin{align}
	dz&=\sum _{l=1}^n(\frac{\partial z}{\partial x^l}dx^l+\frac{\partial z}{\partial y^l}dy^l)\nonumber \\
	&\equiv \sum _{l=1}^n (\frac{\partial z}{\partial x^l}+z\frac{\partial z}{\partial y^l} )dx^l\hspace{0.5cm}mod\lbrace w^1,...,w^n\rbrace .
	\end{align}
	So the equation (\ref{90011}) happens if and only  if 
	\begin{align}
	\frac{\partial z}{\partial x^l}+z\frac{\partial z}{\partial y^l}=0, \qquad \forall \: 1\leq l\leq n,
	\end{align}
	and the proof is completed.
\end{proof}

It is natural to think of \textbf{is there any other integrable structure $J_{\delta , \sigma}$ except the types given by (\ref{90001}) and (\ref{90008})?} 

The following arguments are based on the Bryants answer in Mathoverflow. For more information we refer the reader to their web addresses mentioned in the introduction.

Let $\mathbb{R}^{n+1}$ be given its standard inner product (and extend it complex linearly to a
complex inner product on $\mathbb{C}^{n+1}$, which will be used below). Then 
\begin{align*}
S^n=\lbrace u \in \mathbb{R}^{n+1} |u.u=1  \rbrace
\end{align*}
and 
\begin{align*}
TS^n=\lbrace (u,v)\in \mathbb{R}^{n+1}|u.u=1 \hspace{0.5cm} and\hspace{0.5cm}u.v=0 \rbrace .
\end{align*}
Let $H_+=\lbrace x+iy|y  \gvertneqq 0 \rbrace  \subset \mathbb{C}$ be the upper-half line in $\mathbb{C}$. Define a mapping 

\begin{align*}
\Phi :TS^n \times H_+\longrightarrow \mathbb{C}^{n+1} \setminus  \mathbb{R}^{n+1}
\end{align*}
by $\Phi ((u,v),z)=v-zu$ where any vector $w=(w_1,...,w_{n+1}) \in \mathbb{R}^{n+1}$ is considered as a vector in  $\mathbb{C}^{n+1}$ like this vector $w=(w_1,0,...,w_{n+1},0)$ and so $zu, v-zu$ can be done naturally. $\Phi$ is a diffeomorphism and stablishes a foliation of $\mathbb{C}^{n+1}\setminus\mathbb{R}^{n+1}$ where the leaves of the foliation are the image of $\lbrace(u,v) \rbrace \times H_+$ for every $(u,v)\in TS^n$ under $\Phi$. 

The following Proposition  characterizes the integrable structures $J_{\delta ,\sigma}$ when the base manifold is an sphere.

\begin{proposition}
When $n \geq 2$, the almost complex structure $J_{\delta , \sigma}$ on an open subset $\mathcal{A}\subset TS^n$ is integrable if and only if the image of the mapping 
\begin{align*}
\Phi _z:\mathcal{A}\longrightarrow \mathbb{C}^{n+1}\setminus \mathbb{R}^{n+1},
\end{align*}
with the definition
\begin{align*}
\Phi _z(u,v)=\Phi ((u,v),z(u,v)),
\end{align*}
is a holomorphic hypersurface in $\mathbb{C}^{n+1}\setminus \mathbb{R}^{n+1}$. Where $z:\mathcal{A} \longrightarrow H_+$ is a mapping defined by $z(u,v)=\frac{\sigma +i }{  \delta}(u,v)$.
\end{proposition}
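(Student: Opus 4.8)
The plan is to realize the isotropic $(1,0)$-forms inside the pullbacks $\Phi_z^*(dw_j)$ of the ambient holomorphic coordinate differentials $dw_1,\dots,dw_{n+1}$ of $\mathbb{C}^{n+1}$, and then to read off both ``$J_{\delta,\sigma}$ is integrable'' and ``$N:=\Phi_z(\mathcal{A})$ is complex'' from the same algebraic datum, so that the two become visibly equivalent. First I would record the soft facts: $\Phi_z=\Phi\circ(\mathrm{id},z)$ is an embedding, since the graph map $(u,v)\mapsto((u,v),z(u,v))$ is an embedding and $\Phi$ is a diffeomorphism; and $\mathrm{Im}\,z=1/\delta>0$ forces $\Phi_z(\mathcal{A})\subset\mathbb{C}^{n+1}\setminus\mathbb{R}^{n+1}$. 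Hence $N$ is a smooth real $2n$-submanifold and the only question is whether $TN$ is $J_0$-invariant. Next I would pin down the $(1,0)$-coframe of $J_{\delta,\sigma}$: writing $\theta^h,\theta^v$ for the $u^{\perp}$-valued horizontal and vertical coframe forms, the matrix of $J_{\delta,\sigma}$ in the frame $(X^h,X^v)$ together with $\alpha\delta-\sigma^2=1$ gives $\alpha/(i-\sigma)=-z$ with $z=(\sigma+i)/\delta$, so the $(1,0)$-forms are spanned by the components $\zeta_j=\theta^v_j-z\,\theta^h_j$ of $\theta^v-z\,\theta^h$. I would also note the single relation $\sum_j u_j\zeta_j=0$, coming from $u\cdot\theta^h=u\cdot\theta^v=0$.

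Using the embedded description of lifts on $TS^n$, namely $du=\theta^h$ and $dv=\theta^v-(v\cdot\theta^h)u$ (the second reflecting the second fundamental form of $S^n$), I would then compute
\[ \Phi_z^*(dw_j)=dv_j-z\,du_j-u_j\,dz=\zeta_j-u_j\,\xi,\qquad \xi:=v\cdot\theta^h+dz. \]
The content of this identity is that the ``tangential'' part $\zeta_j$ is already of type $(1,0)$ and lies in $u^{\perp}$, whereas the correction is one scalar form $\xi$ times the fixed vector $u$. From this I would extract the criterion: $N$ is a complex hypersurface iff $TN$ is $J_0$-invariant, and since the components $\zeta_j$ contribute only $(1,0)$-directions in $u^{\perp}$ while $-u_j\xi$ points along $u$, the only possible $(0,1)$-contamination of $\Phi_z^*(dw_j)$ is $-u_j\,\xi^{(0,1)}$. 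Thus $TN$ is $J_0$-invariant precisely when the type-$(0,1)$ part $\xi^{(0,1)}$ vanishes, i.e. when $\xi$ is of type $(1,0)$ (equivalently, when $\Phi_z$ is pseudo-holomorphic).

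The crux is then to show $\xi\in(1,0)\iff J_{\delta,\sigma}$ integrable. Since each $w_j$ is a function, $d(\Phi_z^*dw_j)=0$ yields $d\zeta_j=\theta^h_j\wedge\xi+u_j\,d\xi$, and differentiating $\sum_j u_j\zeta_j=0$ (using $u\cdot\theta^h=0$) gives $d\xi=-\sum_j\theta^h_j\wedge\zeta_j$, which carries no $(0,2)$-part because every term contains the $(1,0)$-factor $\zeta_j$. Extracting $(0,2)$-parts in the first identity and substituting $\theta^h_j=(\bar\zeta_j-\zeta_j)/(2i\,\mathrm{Im}\,z)$ then produces
\[ \big(d\zeta_j\big)^{(0,2)}=\tfrac{\delta}{2i}\,\bar\zeta_j\wedge\xi^{(0,1)}. \]
By the integrability criterion recalled in Section 2, $J_{\delta,\sigma}$ is integrable iff $(d\zeta_j)^{(0,2)}=0$ for all $j$, i.e. iff $\bar\zeta_j\wedge\xi^{(0,1)}=0$ for every $j$. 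Because the $\bar\zeta_j$ span the $(0,1)$-space and $n\ge 2$, this forces $\xi^{(0,1)}=0$: two $1$-forms wedge to zero only when proportional, and a spanning family of a space of dimension $\ge 2$ cannot all be proportional to a single form. Together with the previous paragraph this gives integrability $\iff\xi$ is $(1,0)\iff N$ is a holomorphic hypersurface.

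I expect the genuine difficulty to be the bookkeeping rather than any deep idea: correctly identifying $\zeta_j$ and the embedded lift formulas (with all constants), and then cleanly isolating the $(0,2)$-type in the structure identity. Conceptually the single essential hypothesis is $n\ge 2$, which is used exactly once, to pass from $\bar\zeta_j\wedge\xi^{(0,1)}=0$ to $\xi^{(0,1)}=0$; this is precisely where the low-dimensional exceptions would enter. A point I would emphasize is that the curvature of $S^n$ never appears explicitly: it is entirely absorbed into the embedded relation $dv=\theta^v-(v\cdot\theta^h)u$, so that the closedness $d(\Phi_z^*dw_j)=0$ silently performs the work that a direct structure-equation computation on $TS^n$ would otherwise demand.
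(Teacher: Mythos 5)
Your argument is correct and complete. Note that the paper itself gives no proof of this proposition --- it is stated with a pointer to Bryant's MathOverflow answer --- so there is no in-paper argument to compare against; what you have written is essentially a faithful reconstruction of Bryant's reasoning, and it supplies exactly what the paper omits. The three pillars all check out: the identification $\zeta_j=\theta^v_j-z\,\theta^h_j$ of the $(1,0)$-coframe (using $\alpha\delta-\sigma^2=1$ to see $\alpha/(i-\sigma)=-z$), the identity $\Phi_z^*(dw_j)=\zeta_j-u_j\xi$ with $\xi=v\cdot\theta^h+dz$ coming from $dv=\theta^v-(v\cdot\theta^h)u$, and the reduction of both ``$N$ complex'' and ``$J_{\delta,\sigma}$ integrable'' to $\xi^{(0,1)}=0$, the latter via $(d\zeta_j)^{(0,2)}=\tfrac{\delta}{2i}\,\bar\zeta_j\wedge\xi^{(0,1)}$ and the spanning argument that needs $n\ge 2$. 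The one spot worth tightening in a written version is the converse of the hypersurface criterion: the cleanest route is to observe that $\sum_j u_j\,\Phi_z^*(dw_j)=-\xi$, so the span of the restricted $(1,0)$-forms of $\mathbb{C}^{n+1}$ equals $\Lambda^{1,0}_{J_{\delta,\sigma}}+\mathbb{C}\,\xi^{(0,1)}$, and $N$ is a complex hypersurface iff this span has complex dimension $n$, i.e.\ iff $\xi^{(0,1)}=0$; your ``contamination'' phrasing gestures at this but does not quite pin down why $J_0$-invariance of $TN$ forces it.
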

This Proposition implies that any holomorphic hypersurface of $\mathbb{C}^{n+1} \setminus \mathbb{R}^{n+1}$ that is transverse to the half-line foliation determined by $\Phi$ and intersects each such half-line in at most one point introduces a complex structure $J_{\delta ,\sigma}$. So,  one can construct a complex structure $J_{\delta , \sigma}$ on $TS^n$ by using certain holomorphic hypersurfaces of $\mathbb{C}^{n+1}\setminus \mathbb{R}^{n+1}$.

The following statements gives an other equivalent to the integrability of $J_{\delta ,\sigma}$ on an open subset  $TU\subset TS^n$ where $U$ is an open subset of the unit standard sphere $(S^n,g)$.

Let $E_0:U \subset S^n\to \mathbb{R}^{n+1}$ denote the (vector-valued) inclusion mapping. Let $E_1,...,E_n:   U \to\mathbb{R}^{n+1}$ be any (smooth) orthonormal tangential frame field extending $E_0$, i.e., $<E_a, E_b> = \delta_{ab}$ for $0\le a,b\le n$. Define functions $v_i:TU\to\mathbb{R}$ by $v_i(u,v) = E_i(u){\cdot}v$ for $1\le i\le n$, so that $v = \sum _{i=1}^nv_iE_i(u)$ for all $(u,v)\in TU$. 

One can consider $\zeta ^1,...,\zeta ^n$ as a basis for the $(1,0)$-forms on $TU$ with respect to $J_{\delta , \sigma}$. With the above notifications, the almost complex structure $J_{\delta , \sigma}$  on $\mathcal{A}$ is an integrable structure if and only if  the following equation holds
\begin{align}\label{q1}
d(z^2+v_1^2+...+v_n^2)\equiv 0 \hspace{0.5cm}mod \hspace{0.5cm} \lbrace \zeta ^1,... , \zeta ^n  \rbrace .
\end{align}

One can conclude the following proposition,
\begin{proposition}
Let $((x^1,...,x^n),U)$ be the conformally flat coordinae system on $U\subset (S^n,g)$, $(x^1,...,x^n,y^1,...,y^n)$ be the associated coordinate system on its tangent bundle and moreover let $J_{\delta , \sigma}$ be an isotropic almost complex structure on $TU$. Then $J_{\delta , \sigma}$ is an integrable structure if and only if 
\begin{align*}
\sum _{i=1}^n[\frac{\partial z}{\partial y^i}(y^{s_0}\mu _i-\mu _{s_0}y^i)-\frac{\partial z}{\partial y^{s_0}}y^i\mu _i]=y^{s_0}\lambda ^2-(\frac{\partial z}{\partial x^{s_0}}+z\frac{\partial z}{\partial y^{s_0}}),
\end{align*}
for all $s_0$ with $1\leq s_0 \leq n$.
\end{proposition}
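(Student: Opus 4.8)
The plan is to translate the coordinate-free integrability criterion (\ref{q1}), namely $d(z^2+v_1^2+\cdots+v_n^2)\equiv 0 \ \mathrm{mod}\ \{\zeta^1,\dots,\zeta^n\}$, into the conformally flat chart and then read off the vanishing of the $(0,1)$-part of this $1$-form componentwise. Two ingredients must first be made explicit in coordinates: the fiber functions $v_i$, and a workable $(1,0)$-coframe $\zeta^l$. For the $v_i$, I would normalize the coordinate frame: writing the inclusion as $E_0$, the conformally flat hypothesis $g_{ij}=\lambda^2\delta_{ij}$ means $\partial E_0/\partial x^i$ has squared length $\lambda^2$, so $E_i=\lambda^{-1}\,\partial E_0/\partial x^i$ is the orthonormal tangential frame. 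Since the fiber vector is $v=\sum_j y^j\,\partial E_0/\partial x^j$, this gives $v_i=E_i(u)\cdot v=\lambda\,y^i$ and hence $v_1^2+\cdots+v_n^2=\lambda^2\,|y|^2$ with $|y|^2=\sum_i(y^i)^2$ and $\lambda=\lambda(x)$. Thus the function whose differential controls integrability is simply $z^2+\lambda^2|y|^2$.

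Next I would produce the $(1,0)$-coframe directly from the defining relations (\ref{4}). Representing $J_{\delta,\sigma}$ on each plane spanned by the lifts $(\partial_{x^i})^h,(\partial_{x^i})^v$, solving for its $-\sqrt{-1}$-eigenvectors, and dualizing, one finds that the forms $\zeta^l=Dy^l-z\,dx^l$ annihilate the $(0,1)$-vectors and hence span the $(1,0)$-forms, where $Dy^l=dy^l+\sum_{k,j}\Gamma^l_{kj}\,y^j\,dx^k$ is the vertical connection $1$-form and $z=(\sigma+\sqrt{-1})/\delta$. This is the exact analogue of the coframe $w^l=dy^l-z\,dx^l$ from the flat case of Proposition \ref{90013}, now twisted by the Levi-Civita connection of $g$. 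Reducing modulo $\{\zeta^l\}$ then amounts to the substitution $dy^l\equiv z\,dx^l-\sum_{k,j}\Gamma^l_{kj}\,y^j\,dx^k$. Applying $d$ to $z^2+\lambda^2|y|^2$, inserting this relation into every $dy^i$, and collecting the coefficient of each $dx^{s_0}$ turns the criterion (\ref{q1}) into the demand that these $n$ coefficients vanish.

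The final step is the algebraic simplification, and this is where the conformally flat structure does the work. Here I would substitute the conformal Christoffel symbols $\Gamma^k_{ij}=\delta^k_i\mu_j+\delta^k_j\mu_i-\delta_{ij}\mu_k$ with $\mu_i=\partial_{x^i}\log\lambda$, so that $\lambda\,\partial_{x^{s_0}}\lambda=\lambda^2\mu_{s_0}$. Expanding the triple sum $\sum_{i,j}\big(z\,\partial_{y^i}z+\lambda^2 y^i\big)\Gamma^i_{s_0 j}\,y^j$ produces, among its terms, a pair $\pm\lambda^2 y^{s_0}\sum_i\mu_i y^i$ that cancels internally, and a term $\lambda^2\mu_{s_0}|y|^2$ that cancels against the contribution of $d(\lambda^2|y|^2)$. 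After these cancellations an overall factor $z$ survives in every remaining term; dividing by it (legitimate since $\mathrm{Im}\,z=1/\delta>0$, so $z\neq 0$) leaves precisely
\begin{align*}
\sum_{i=1}^n\left[\frac{\partial z}{\partial y^i}(y^{s_0}\mu_i-\mu_{s_0}y^i)-\frac{\partial z}{\partial y^{s_0}}y^i\mu_i\right]=y^{s_0}\lambda^2-\left(\frac{\partial z}{\partial x^{s_0}}+z\frac{\partial z}{\partial y^{s_0}}\right),
\end{align*}
for each $s_0$, which is the asserted equation.

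I expect the main obstacle to be the bookkeeping in this last step rather than any conceptual difficulty: one must expand the connection terms, keep the derivatives $\partial_{y^i}z$ and $\partial_{x^i}z$ carefully distinct, and verify that the curvature-induced $\lambda^2$-contributions pair off exactly so that the stated first-order combination emerges after factoring out $z$. Care with the sign and normalization conventions for $\lambda$ and $\mu_i$ (equivalently, the curvature/embedding normalization of $S^n$) is what pins down the precise form of the right-hand side.
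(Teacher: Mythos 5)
Your proposal follows essentially the same route as the paper's own proof: compute $v_i=\lambda y^i$ in the conformal chart, take the connection-twisted coframe $\zeta^k=dy^k+y^j(\delta^k_j\mu_l dx^l+\mu_j dx^k-\mu_k dx^j)-z\,dx^k$, reduce $d(z^2+v_1^2+\cdots+v_n^2)$ modulo $\{\zeta^1,\dots,\zeta^n\}$, and read off the $dx^{s_0}$-coefficients after factoring out $2z$. The cancellations you predict (the internal $\pm\lambda^2 y^{s_0}\sum_i\mu_i y^i$ pair and the $\lambda^2\mu_{s_0}\|y\|^2$ term against $d(\lambda^2\|y\|^2)$) are exactly those occurring in the paper's displayed identity (\ref{q2}), so the argument is the same in substance and only more explicit about how the coframe is derived.
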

\begin{proof}
	By considering $\lambda$ as the confrmal factor, the metric $g$ on $U$ can be written as follow
	\begin{align}
	g=\lambda ^2 (dx^1\otimes dx^1 +...+ dx^n \otimes dx^n).
	\end{align}
	So, one can define $E_i=\frac{1}{\lambda}\frac{\partial}{\partial x^i}$ for all $i=1,...,n$ and 
	\begin{align*}
	v_i(u,v)=\lambda (u)y^i, \qquad \forall \: (u,v)\in TU \qquad \text{and} \qquad i=1,...,n.
	\end{align*}
	It can be proved that in this coordinate system, $\zeta ^1 ,..., \zeta ^n$ are given by
	\begin{align*}
	\zeta ^k=dy^k+y^j(\delta _j^k\mu _ldx^l+\mu _jdx^k-\mu _kdx^j)-zdx^k,
	\end{align*}
	where $\mu _i=\frac{1}{\lambda}\frac{\partial \lambda}{\partial x^i}, \,  i=1,...,n$ and we used the Einstein summation in $l,j$. By using the all of above equations, one can get the follows
	\begin{align*}
	d_{(u,y)}(z^2+v_1^2+...+v_n^2)&=\sum _{i=1}^n[(2z\frac{\partial z}{\partial x^i}+2\mu _i ||y||^2)dx^i +2(y^i\lambda ^2 + z \frac{\partial z}{\partial y^i})dy^i],
	\end{align*}
	and
	\begin{align}\label{q2}
	&d_{(u,y)}(z^2+v_1^2+...+v_n^2)-2\sum _{i=1}^2(y^i\lambda ^2 + z \frac{\partial z}{\partial y^i})\zeta ^i (u,y)\nonumber \\
	&=2z\sum _{s=1}^n [\frac{\partial z}{\partial y^i}(y^s\mu _i-\mu _sy^i)+\frac{\partial z}{\partial y^s}(z-y^j\mu _j)-y^s\lambda ^2+\frac{\partial z}{\partial x^s}]dx^s.
	\end{align}
 Using the equations (\ref{q1}) and (\ref{q2}), one can get the conclusion.
\end{proof}

When  $k=0$ and  $\alpha , \delta$ and $\sigma$ are functions of $g(u,u)$, the Equations (\ref{90001}) and (\ref{90008}) show that they must be constant functions on $T\mathbb{R}^n$, but  there are non-constant examples $\delta ,\sigma$ such that define a complex structure $J_{\delta , \sigma}$.  One can characterize the integrable structures $J_{\delta , \sigma}$ on $T\mathbb{R}^n$ by using the holomorphic hypersurfaces of $\mathbb{C}^{n+1}\setminus \mathbb{R}^{n+1}$. 
\begin{example}
Let $v=\frac{1}{\delta}$ and $u=\frac{\sigma}{\delta}$ and define 
$$u(x,y)=\frac{x.y}{1+x.x},$$
and 
$$v=(x,y)=\frac{\sqrt{(1+x.x)(1+y.y)-(x.y)^2}}{1+x.x},$$
where $x=(x^1,...,x^n)$, $y=(y^1,...,y^n)$ and "." denotes the standard product on $\mathbb{R}^n$. It is easy to check that $z=u+iv$ satisfies the Proposition \ref{90013} and so $J_{\delta , \sigma}$ is a complex structure on a certain open subset of $T\mathbb{R}^n$.
\end{example}

\section{\texorpdfstring{A class of Riemannian metrics on $TM$} {}}

In this section, the Riemannian metric associated to the almost complex structure $J_{\delta ,\sigma}$ and the $1$-form Liouville $\Theta $  will be introduced and finally, thier Levi-Civita connection will be calculated.

 These metrics are a generalized type of the Sasaki metric and in some cases intersect g-natural metrics, specially the metrics induced by  those $J_{\delta , \sigma}$ whom Aguilar introduced by the equations (\ref{90001}) and (\ref{90008}).

Here after we suppose that $(M,g)$ is a Riemannian manifold and $\Theta \in \Omega ^1 (TM)$ is the  Liouvill 1-form defined by
\begin{align}
\Theta _v (A)=g_{\pi(v)}(\pi _*(A),v), \hspace{5mm}  A \in T_vTM, \hspace{5mm}  v \in TM.\label{5}
\end{align}
and $J_{\delta , \sigma}$ is an isotropic almost complex structure defined on the whole of $TM$.
\begin{definition} \cite{aguilar}\label{1001}
 Let $(M,g)$ be a Riemannian manifold and $J_{\delta,\sigma}$ be an isotropic almost complex structure on $TM$. Then the $(0,2)$-tensor $$g_{\delta,\sigma}(A,B)=d\Theta (J_{\delta,\sigma}A,B)$$
where $ A,B \in \Gamma (TTM)$,  defines a Riemannian metric on $TM$ if $ \alpha >0$ .
\end{definition}
Let $X,Y$ be local sections of $TM$. A simple calculation shows that
\begin{align}
g_{\delta , \sigma}(X^h,Y^h)=& \alpha g(X,Y) o\pi ,\label{6}\\
g_{\delta , \sigma}(X^h,Y^v)=& - \sigma g(X,Y) o\pi ,\label{7}\\ 
g_{\delta , \sigma}(X^v,Y^v)=& \delta g(X,Y) o\pi .\label{8}
\end{align}

\begin{remark}\cite{sarih}
Let $(M,g)$ be a Riemannian manifold and $G$ be a g-natural metric on $TM$. Then there are functions $\alpha _i,\beta _i :[0,\infty) \to \mathbb{R}$ for all $i=1,2,3$ such that for every $u, X, Y\in T_xM$, we have
\begin{align*}
&G_{(x,u)}(X^h,Y^h)=(\alpha _1 + \alpha _3)(r^2)g_x(X,Y)+(\beta _1 +\beta _3)(r^2)g_x(X,u)g_x(Y,u),\\
&G_{(x,u)}(X^h,Y^v)=\alpha _2(r^2)g_x(X,Y)+\beta _2(r^2)g_x(X,u)g_x(Y,u),\\
&G_{(x,u)}(X^v,Y^v)=\alpha _1(r^2)g_x(X,Y)+\beta _1(r^2)g_x(X,u)g_x(Y,u),
\end{align*}
where $r^2=g_x(u,u)$.
\end{remark}
 By letting 
$$ \alpha _1(g(u,u))=\delta (u),\,  \alpha _2(g(u,u))=-\sigma (u),\,  \alpha _3(g(u,u))=\alpha (u)-\delta (u),$$
(where $\sigma ,\, \delta$ are mappings defined in  (\ref{90001}) and (\ref{90008})  and $\alpha$ satisfies $\alpha \delta -\sigma ^2=1$) show that Aguilar in the Theorem $1$ of  \cite{aguilar} characterized the all of complex structures $J_{\delta , \sigma}$(in the existence part of the Theorem) which define g-natural metrics on the tangent bundle. 

Moreover it is worth mentioning that if the base manifold is the Euclidean space $\mathbb{R}^n$ then $\alpha ,\delta$ and $\sigma$ are constant functions if and only if the introduced metric on $TM$ by the complex structure $J_{\delta ,\sigma}$ is a g-natural metric. Moreover, in this case $\sigma$ must vanish and $\alpha =\frac{1}{\delta}$.
\begin{remark}
There are integrable structures $J_{\delta , \sigma}$ on the Euclidean space and Sphere such that the induced metrics by them are not g-natural metrics.
\end{remark}
\begin{remark}
It is  proved that the invariant isotropic complex structures $J_{\delta , \sigma}$ (where Aguilar called $J_{\delta , \sigma}$ to be invariant  if it is invariant by the natural action of the tangent maps of all the isometries of $M$)  on some open subset of $TM$ are those $J_{\delta , \sigma}$  with $\delta$ and $\sigma$ as in (\ref{90001}) and (\ref{90008}). More details are explained in the Proposition $3.1$ of \cite{aguilar}. 
\end{remark}

Next, we shall calculate the Levi-Civita connection of $g_{\delta ,\sigma}$. 
\begin{lemma}\label{1002}
\cite{sarih} Let $X$, $Y$ and $Z$ be any vector fields on $M$. Then
\begin{align}
&X^h(g(Y,Z)o\pi)=(Xg(Y,Z))o\pi ,\label{9}\\
&X^v(g(Y,Z)o\pi)=0.\label{10}
\end{align}
\end{lemma}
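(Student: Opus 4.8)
The statement to prove is Lemma~\ref{1002}, which asserts two computational identities for the horizontal and vertical lifts acting on functions of the form $g(Y,Z)\circ\pi$. The plan is to reduce both identities to the defining properties of the lifts and the projection $\pi$. First I would recall that for any function $f$ on $M$, its pullback $f\circ\pi$ is a function on $TM$, and that differentiating a pullback along a lifted vector field is governed by how that lift interacts with $\pi_*$. The key observation is that $\pi_*(X^h_u) = X(\pi(u))$ by the definition of the horizontal lift (since $\pi_{*}|_{\mathcal H_u}$ is the stated isomorphism and $X^h_u$ is the preimage of $X(\pi(u))$), whereas $\pi_*(X^v_u) = 0$ because the vertical subspace $\mathcal V_u$ is exactly the kernel of $\pi_{*u}$.

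For the first identity~(\ref{9}), I would compute $X^h(g(Y,Z)\circ\pi)$ at a point $u\in TM$ directly as a directional derivative. Writing $g(Y,Z)\circ\pi = (g(Y,Z))\circ\pi$ and using the chain rule for the pullback of a function, the derivative along $X^h_u$ equals the derivative of $g(Y,Z)$ along $\pi_*(X^h_u) = X(\pi(u))$. This yields exactly $\bigl(X\,g(Y,Z)\bigr)(\pi(u))$, i.e.\ $(Xg(Y,Z))\circ\pi$ evaluated at $u$. Since $u$ is arbitrary, this proves~(\ref{9}).

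For the second identity~(\ref{10}), the same chain-rule argument applies, but now the derivative is taken along $\pi_*(X^v_u)$. Because the vertical lift lands in $\mathcal V_u = \ker \pi_{*u}$, we have $\pi_*(X^v_u) = 0$, so the directional derivative of the pullback $g(Y,Z)\circ\pi$ vanishes identically. This gives $X^v(g(Y,Z)\circ\pi) = 0$, establishing~(\ref{10}).

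I do not anticipate a genuine obstacle here, since the lemma is purely computational and follows from the elementary fact that a pullback function is differentiated via the pushforward of the tangent vector. The only point requiring care is the clean identification of $\pi_*$ on the horizontal and vertical subspaces from the structure recalled in Section~2; once those two facts are invoked, both formulas drop out immediately from the chain rule, with no curvature or connection terms appearing because the argument never differentiates the lifts themselves, only the scalar pullback.
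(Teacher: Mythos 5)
Your proof is correct and is exactly the standard argument: since $\pi_{*u}X^h_u = X_{\pi(u)}$ and $\pi_{*u}X^v_u = 0$, the chain rule applied to the pullback $g(Y,Z)\circ\pi$ immediately yields both identities. The paper itself states this lemma without proof, citing \cite{sarih}, and your argument is the one that reference (and any standard treatment) uses, so there is nothing further to reconcile.
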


\begin{theorem} \label{1003}
Let $g_{\delta,\sigma}$ be a Riemannian metric on $TM$ as before. Then the Levi-Civita connection ̅$\bar{\nabla}$ of $g_{\delta,\sigma}$ at $(p,u) \in TM$ is given by

\begin{align}
\bar{\nabla}_{X^h} Y^h&=(\nabla _X Y)^h- \frac{\sigma}{\alpha} (R(u,X)Y)^h+ \frac{1}{2 \alpha} X^h (\alpha) Y^h +  \frac{1}{2 \alpha}Y^h(\alpha) X^h
\nonumber \\
& - \frac{\sigma}{\delta} (\nabla _X Y)^v-\frac{1}{2}(R(X,Y)u)^v- \frac{1}{2 \delta} X^h(\sigma) Y^v
\nonumber \\
&-\frac{1}{2\delta}Y^h(\sigma)X^v-\frac{1}{2}g(X,Y)\bar{\nabla} \alpha ,\label{11}\\
\bar{\nabla}_{X^h} Y^v&=-\frac{\sigma}{\alpha} (\nabla _X Y)^h + \frac{\delta}{2 \alpha}(R(u,Y)X)^h-\frac{1}{2\alpha}X^h(\sigma) Y^h
\nonumber\\
&+\frac{1}{2\alpha}Y^v(\alpha) X^h+(\nabla _X Y)^v+\frac{1}{2\delta}X^h(\delta)Y^v- \frac{1}{2\delta}Y^v(\sigma)X^v
\nonumber\\
&+ \frac{1}{2}g(X,Y) \bar{\nabla} \sigma ,\label{12}\\
\end{align}
\begin{align}
\bar{\nabla}_{X^v} Y^h&=\frac{\delta}{2\alpha}(R(u,X)Y)^h+ \frac{1}{2\alpha}X^v(\alpha)Y^h-\frac{1}{2\alpha}Y^h(\sigma)X^h
\nonumber\\
&- \frac{1}{2\delta}X^v(\sigma)Y^v+\frac{1}{2\delta}Y^h(\delta)X^v+\frac{1}{2}g(X,Y)\bar{\nabla}\sigma ,\label{13}\\
\bar{\nabla}_{X^v} Y^v&=-\frac{1}{2\alpha}X^v(\sigma)Y^h- \frac{1}{2\alpha}Y^v(\sigma)X^h+ \frac{1}{2\delta}X^v(\delta)Y^v
 \\
&+\frac{1}{2\delta}Y^v(\delta)X^v-\frac{1}{2}g(X,Y)\bar{\nabla}\delta .\label{14}
\end{align}
\end{theorem}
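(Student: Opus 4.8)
The plan is to read off $\bar\nabla$ from the Koszul formula
\[
2g_{\delta,\sigma}(\bar\nabla_A B, C) = A\,g_{\delta,\sigma}(B,C) + B\,g_{\delta,\sigma}(A,C) - C\,g_{\delta,\sigma}(A,B) + g_{\delta,\sigma}([A,B],C) - g_{\delta,\sigma}([A,C],B) - g_{\delta,\sigma}([B,C],A),
\]
where $A,B$ run over the four pairs of lifts $(X^h,Y^h),(X^h,Y^v),(X^v,Y^h),(X^v,Y^v)$ and the test field $C$ runs over $Z^h$ and $Z^v$ for arbitrary $Z\in\Gamma(TM)$. Each of the six Koszul summands can be evaluated with exactly the tools already in hand: the metric values (\ref{6})--(\ref{8}), the bracket identities (\ref{1})--(\ref{3}), and Lemma \ref{1002} for the directional derivatives of $g(\cdot,\cdot)\circ\pi$ along horizontal and vertical lifts. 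This produces, for fixed $A,B$, two scalar expressions $g_{\delta,\sigma}(\bar\nabla_A B, Z^h)$ and $g_{\delta,\sigma}(\bar\nabla_A B, Z^v)$, each linear in $Z$.

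The key structural feature is that $g_{\delta,\sigma}$ is \emph{not} diagonal in the horizontal/vertical splitting: on each fibre its Gram block against $(Z^h,Z^v)$ is $\begin{pmatrix}\alpha & -\sigma\\ -\sigma & \delta\end{pmatrix}$, whose determinant is exactly $\alpha\delta-\sigma^2=1$, so its inverse is the equally simple $\begin{pmatrix}\delta & \sigma\\ \sigma & \alpha\end{pmatrix}$. Writing $\bar\nabla_A B = P^h + Q^v$ and pairing against $Z^h$ and $Z^v$ gives a $2\times 2$ linear system for $\bigl(g(P,Z),g(Q,Z)\bigr)$ at each $Z$; applying the inverse block recovers $P$ and $Q$, and the determinant being $1$ is what keeps all coefficients rational in $\alpha,\delta,\sigma$ rather than in $\alpha\delta-\sigma^2$.

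Within the Koszul output I expect three kinds of terms. First, the genuine derivative terms $X\,g(Y,Z)$, $g([X,Y],Z)$, etc. should recombine, using the base-manifold Koszul identity together with $\nabla$-compatibility $Z\,g(X,Y)=g(\nabla_Z X,Y)+g(\nabla_Z Y,X)$, into the Levi-Civita pieces $(\nabla_X Y)^h$ and $(\nabla_X Y)^v$. Second, the curvature terms arise solely from the $-(R(X,Y)u)^v$ part of $[X^h,Y^h]$ in (\ref{1}); the three curvature contributions collapse into one via the pair symmetry $R(X,Y,Z,W)=R(Z,W,X,Y)$ and the first Bianchi identity, using the computation $g(R(X,Y)u,Z)-g(R(X,Z)u,Y)-g(R(Y,Z)u,X)=-2g(R(u,X)Y,Z)$, which explains the single terms $R(u,X)Y$, $R(u,Y)X$ and $R(X,Y)u$ in (\ref{11})--(\ref{14}). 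Third, the terms in which $C$ differentiates a coefficient, namely $-C(\alpha)g(X,Y)$, $-C(\sigma)g(X,Y)$, $-C(\delta)g(X,Y)$, are \emph{not} of the form $g(\cdot,Z)$; rather they equal $-g(X,Y)\,d(\cdot)(C)$, so their $g_{\delta,\sigma}$-sharp is precisely the gradient $\bar\nabla\alpha,\bar\nabla\sigma,\bar\nabla\delta$, which is why these vectors appear intact (carrying both a horizontal and a vertical part) in the final formulas. Throughout, the differentiated constraint $\delta\,d\alpha+\alpha\,d\delta-2\sigma\,d\sigma=0$ coming from $\alpha\delta-\sigma^2=1$ is used to normalize the surviving coefficient-derivative terms into the stated shape.

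To economize, only three of the four cases need the full computation: since $\bar\nabla$ is torsion-free and $[X^h,Y^v]=(\nabla_X Y)^v$ by (\ref{2}), one has $\bar\nabla_{X^v}Y^h=\bar\nabla_{Y^h}X^v-(\nabla_Y X)^v$, so (\ref{13}) follows from (\ref{12}) by relabeling. The main obstacle is not the curvature algebra but the bookkeeping in the coupled cases: one must track, \emph{separately} across the two test directions $Z^h$ and $Z^v$, which horizontal and which vertical coefficient-derivatives ($X^h(\alpha)$, $X^v(\sigma)$, $Y^h(\delta)$, and so on) survive, then push all of them through the off-diagonal inverse block and reduce with the constraint. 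It is at this stage — the interaction between the $\sigma$-induced off-diagonal mixing and the many coefficient-derivative terms — that sign and index slips are easiest to make and where the computation must be carried out most carefully.
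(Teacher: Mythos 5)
Your proposal is correct and follows essentially the same route as the paper: the Koszul formula evaluated against $Z^h$ and $Z^v$ using the bracket identities (\ref{1})--(\ref{3}), the metric values (\ref{6})--(\ref{8}) with Lemma \ref{1002}, and the first Bianchi identity to collapse the curvature terms into $R(u,X)Y$, with the undifferentiated $g(X,Y)$-terms sharpening to the gradients $\bar{\nabla}\alpha$, $\bar{\nabla}\sigma$, $\bar{\nabla}\delta$. Your explicit inversion of the off-diagonal Gram block $\begin{pmatrix}\alpha & -\sigma\\ -\sigma & \delta\end{pmatrix}$ (determinant $1$) is a welcome sharpening of the step the paper performs only implicitly when it reads off the horizontal and vertical components separately, and the torsion-freeness shortcut for (\ref{13}) is a harmless economy; otherwise the two arguments coincide.
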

\begin{proof} We just prove (\ref{11}), the remaining ones are similar. Using Koszul formula, we have
\begin{align*}
2g_{\delta,\sigma}(\bar{\nabla}_{X^h} Y^h,Z^h)&=X^hg_{\delta,\sigma}(Y^h,Z^h)+Y^hg_{\delta,\sigma}(X^h,Z^h)-Z^hg_{\delta,\sigma}(X^h,Y^h)
\nonumber \\
&+g_{\delta,\sigma}([X^h,Y^h],Z^h)+g_{\delta,\sigma}([Z^h,X^h],Y^h)\\
&-g_{\delta,\sigma}([Y^h,Z^h],X^h).
\nonumber \\
\end{align*}
Using relations (\ref{1}), (\ref{6}) and (\ref{9}) gives us
\begin{align*}
2g_{\delta,\sigma}(\bar{\nabla}_{X^h} Y^h,Z^h)&=X^h(\alpha)g(Y,Z)+ \alpha Xg(Y,Z)+Y^h(\alpha)g(X,Z)\\
&+\alpha Yg(X,Z)-Z^h(\alpha)g(X,Y)-\alpha Zg(X,Y)\\
&+\alpha g([X,Y],Z)+\sigma g(R(X,Y)u,Z)+ \alpha g([Z,X]Y)\\
&+\sigma g(R(Z,X)u,Y)-\alpha g([Y,Z],X)\\
&- \sigma g(R(Y,Z)u,X).
\end{align*}
Using the properties of the Levi-Civita connection of $g$, we can get
\begin{align*}
2g_{\delta,\sigma}(\bar{\nabla}_{X^h} Y^h,Z^h)&=g(X^h(\alpha)Y,Z)+g(Y^h(\alpha)X,Z)-Z^h(\alpha) g(X,Y)\\
&+2\alpha g(\nabla _XY,Z)+ \sigma g(R(X,Y)u,Z)\\
&+ \sigma g(R(Z,X)u,Y)-\sigma g(R(Y,Z)u,X).
\end{align*}
Taking into account (\ref{6}) and the Bianchi’s first identity, we have
\begin{align*}
2g_{\delta,\sigma}(\bar{\nabla}_{X^h} Y^h,Z^h)&=g_{\delta,\sigma}(\frac{1}{\alpha}X^h(\alpha)Y^h+\frac{1}{\alpha}Y^h(\alpha)X^h-g(X,Y)\bar{\nabla}\alpha\\
 &+2 (\nabla _X Y)^h-\frac{2\sigma}{\alpha}(R(u,X)Y)^h,Z^h),
\end{align*}
so the horizontal component of $\bar{\nabla}_{X^h} Y^h$ is
\begin{align*}
h(\bar{\nabla}_{X^h} Y^h)&=\frac{1}{2\alpha}X^h(\alpha)Y^h+\frac{1}{2\alpha}Y^h(\alpha)X^h-\frac{1}{2}g(X,Y)h(\bar{\nabla}\alpha) + (\nabla _X Y)^h\\
&-\frac{\sigma}{\alpha}(R(u,X)Y)^h,
\end{align*}
where $\bar{\nabla}\alpha=h(\bar{\nabla}\alpha)+v(\bar{\nabla}\alpha)$ is the splitting of the gradient vector field of $\alpha$ with respect to $g_{\delta,\sigma}$ to horizontal and vertical components, respectively. Similarly the vertical component of $\bar{\nabla}_{X^h} Y^h$ is
\begin{align*}
v(\bar{\nabla}_{X^h} Y^h)&=-\frac{1}{2\delta}X^h(\sigma)Y^v-\frac{1}{2\delta}Y^h(\sigma)X^v-\frac{1}{2}g(X,Y)v(\bar{\nabla}\alpha)\\
&-\frac{\sigma}{\delta}(\nabla _X Y)^v-\frac{1}{2}(R(X,Y)u)^v.
\end{align*}
Using the equation $(\bar{\nabla}_{X^h} Y^h)=h(\bar{\nabla}_{X^h} Y^h)+v(\bar{\nabla}_{X^h} Y^h)$, the proof will be completed.
\end{proof}

\section{Harmonic unit vector fields}
In this section after calculating the tension field of a map defined by a unit vector field $X:(M,g) \to (TM,g_{\delta , \sigma})$, we shall compute its tension field as a map from $(M,g)$ to the $(S(M),i^*g_{\delta,0})$ and finally deduce some results on harmonic unit vector fields.
\subsection{\texorpdfstring{tension field of a map $X:(M,g)\longrightarrow (TM,g_{\delta,\sigma})$ defined by the vector field $X$} {}}

In this sub-section and next, the formula of the tension field associated to a map between Riemannian manifolds is retrieved from \cite{hajime}. So, one can refer to \cite{hajime} for more details.

Suppose $(M,g)$ and $(M',g')$ are two Riemannian manifolds, with $M$ compact. The Dirichlet energy associated to the Riamannian manifolds $(M,g)$ and $(M',g')$ is defined by
\begin{align*}
\begin{array}{ccc}
E:C^{\infty}(M,')	& \longrightarrow & \mathbb{R}^+ \\ 
f	& \longmapsto & \frac{1}{2}\int _M ||df||^2 d\text{vol}(g).
\end{array}
\end{align*}
 
Where $||df||$ is the Hilbert-Schmitd norm of $df$, i.e., $||df||^2=tr_g(f^*g')$ and $d\text{vol}(g)$ is the Riemannian volume form on $M$ with respect to $g$.
\begin{remark}\label{1013}
The expression $e(f)= \frac{1}{2}||df||^2=\frac{1}{2}tr_g(f^*g')$ is the so-called energy density of $f$.
\end{remark}
The critical points of $E$ are defined as harmonic maps. It is proved \cite{hajime} that a map $f:(M,g)\longrightarrow (M',g')$ is a harmonic map if and only if the tension field associated to $f$ vanishes identically. Therefore, one can investigate the harmonicity of a map defined by a vector field by calculating the tension field associated to this map. 

Suppose $(M,g)$ is a compact Riemannian manifold and $g_{\delta , \sigma}$ be a given metric on $TM$ defined as before and $W \in \Gamma (TM)$. Let $\lbrace V_1,...,V_n \rbrace$ be a local orthonormal basis for the vector fields on $M$, defined in a neighborhood of $p \in M$ such that $\nabla V_i=0$ at $p$. The Dirichlet energy of the map $W:(M,g) \longrightarrow (TM,g_{\delta , \sigma})$ defined by $W$ can be calculated as following
\begin{align*}
E(W)&=\frac{1}{2}\int _M ||d W||^2d\text{vol}(g)=\frac{1}{2}\int _M tr_gW^*(g_{\delta ,\sigma})d\text{vol}(g)
\nonumber \\
&=\frac{1}{2}\int _M\sum _{i=1}^n g_{\delta ,\sigma}(W_*(V_i),W_*(V_i))d\text{vol}(g)
\nonumber \\
&=\frac{1}{2}\int _M \sum _{i=1}^ng_{\delta ,\sigma}(V_i^h+ (\nabla _{V_i}W)^v,V_i^h+ (\nabla _{V_i}W)^v)d\text{vol}(g)
\nonumber \\
&=\frac{1}{2}\int _M (n\alpha -2 \sigma div(W)+ \delta ||\nabla W||^2) d\text{vol}(g).
\end{align*}

The tension field associated to the map $X:(M,g)\longrightarrow (TM,g_{\delta , \sigma})$ is locally defined by 
\begin{align*}
\tau _q (X) = \sum _{i=1}^n\lbrace \bar{\nabla}_{X_*(V_i)} X_*(V_i)- X_*(\nabla_{V_i}V_i )\rbrace (X(q)),
\end{align*}
for every $q$ in the domain of $V_i$, $i=1,...,n$. This definition is independent of the choice of $\lbrace V_1 ,...,V_n \rbrace $, so is a global definition on $M$. 

Therefore, one can write
\begin{align}\label{1017}
\tau _q (X)&=\sum _{i=1}^n\lbrace \bar{\nabla}_{X_*(V_i)} X_*(V_i)- X_*(\nabla_{V_i}V_i )\rbrace (X(q))
 \nonumber\\
&=\sum _{i=1}^n\lbrace \bar{\nabla}_{V_i^h + (\nabla _{V_i}X)^v}V_i^h + (\nabla _{V_i}X)^v
\nonumber \\
& -(\nabla _{V_i}V_i)^h-(\nabla _{\nabla _{V_i}V_i}X)^v\rbrace (X(q))
\nonumber\\
&=\sum _{i=1}^n\lbrace \bar{\nabla}_{V_i^h}V_i^h +\bar{\nabla}_{V_i^h}(\nabla _{V_i}X)^v +\bar{\nabla}_{(\nabla _{V_i}X)^v}V_i^h
\nonumber \\
&+\bar{\nabla}_{(\nabla _{V_i}X)^v}(\nabla _{V_i}X)^v -(\nabla _{V_i}V_i)^h-(\nabla _{\nabla _{V_i}V_i}X)^v\rbrace (X(q)).
\end{align}
\textbf{Notation:} Let $h$ be a $(0,2)$-tensor on  a Riemmanian manifold $(M,g)$ and $e_1,...,e_n$ be any orthonormal vectors at $p\in M$. We denote the $tr_gh=\sum _{i=1}^nh(e_i$\\$,e_i)$ by $tr_gh(\LargerCdot , \LargerCdot)$.

Using the equations (\ref{11}),...,(\ref{14}) and $(\nabla V_i)(p)=0$ in (\ref{1017}), we have the following formula of the tension field associated to $X$
\begin{align}\label{24}
\tau _p(X)&=\frac{1}{\alpha} \lbrace (1-\frac{n\alpha}{2})X_1- \frac{\alpha}{2}||\nabla X||^2 Y_1+\alpha div(X)Z_1+ tr_g(\nabla _{\LargerCdot})^v(\alpha)\LargerCdot
\nonumber \\
&-\sigma Ric(X)-\nabla _{\alpha Z_1-\sigma Z_2}X-tr_g(\nabla _{\LargerCdot}X)^v(\sigma)\nabla _{\LargerCdot}X
\nonumber \\
&-\sigma tr_g(\nabla _{\LargerCdot}\nabla _{\LargerCdot}X)+\delta tr_gR(X,\nabla _{\LargerCdot}X)\LargerCdot\rbrace ^h(X(p))
\nonumber \\
& + \frac{1}{\delta}\lbrace -\frac{n\delta}{2}X_2- \frac{\delta}{2}||\nabla X||^2 Y_2+\delta div(X)Z_2 -\alpha Z_1 +\sigma Z_2
\nonumber \\
& -tr_g(\nabla _{\LargerCdot}X)^v(\sigma)\LargerCdot+ \nabla _{\alpha Y_1 -\sigma Y_2}X + tr_g(\nabla_{\LargerCdot}X)^v(\delta)\nabla _{\LargerCdot}X
\nonumber \\
&+\delta \Delta _gX \rbrace ^v(X(p)),
\end{align}

Where $X_1=\pi _* ((\bar{\nabla}\alpha )o X)$, $X_2=K((\bar{\nabla}\alpha) o X)$, $Y_1=\pi _* ((\bar{\nabla}\delta ) oX)$, $Y_2=K((\bar{\nabla}\delta ) oX)$, $Z_1=\pi _* ((\bar{\nabla}\sigma ) oX)$ and $Z_2=K((\bar{\nabla}\sigma )oX)$ are vector fields on $M$ and $\Delta _gX=-\sum _{i=1}^n\lbrace  \nabla _{V_i}\nabla _{V_i}X-\nabla _{\nabla _{V_i}V_i}X\rbrace$ is the so-called rough Laplacian of $X$.
\begin{remark}
All of the traces stated in the equation (\ref{24}) are the traces of some tensors, so are independent of the choice of the orthonormal frame.
\end{remark}

\subsection{Tension field of unit vector fields}
The unit tangent bundle $S(M)$ on a Riemannian manifold $(M,g)$ is a fiber bundle on $(M,g)$ which its fibers at every point $p \in M$ is the set
\begin{align*}
S_p(M)= \lbrace v \in T_pM | \,g(v,v)=1 \rbrace .
\end{align*}
The tangent space of $S(M)$ is $\mathcal{H} \oplus \bar{\mathcal{V}}$, where $\mathcal{H} $ is the horizontal sub-bundle of $TTM$ with respect to the Levi-Civita connection $\nabla$ of $g$ and $\bar{\mathcal{V}}$ is the vector bundle on $S(M)$ such that at $(p,u) \in S(M)$ is defined by
\begin{align*}
\bar{\mathcal{V}}_{(p,u)}&= \lbrace Y_p^v \in \mathcal{V}_u |\, g(Y_p,u)=0, \hspace{0.1cm}\forall Y_p\in T_pM \rbrace 
\nonumber \\
&= \lbrace Y_p^v-g(Y_p,u)u^v |\,Y_p \in T_pM  \rbrace ,\label{27}
\end{align*}
where $Y_p^v \in \mathcal{V}_u $ is the vertical lift of $Y_p$ to  $\mathcal{V}_u$.
Let now $g_{\delta ,\sigma}$ be the Riemannian metric on $TM$ defined as before. Assum  $N^{g_{\delta , \sigma }}(p,u)= \sqrt{\alpha }(\frac{\sigma}{\alpha}u^h+u^v)$ is a vector field on $TM$, one can simply derive $N^{g_{\delta , \sigma }}(p,u)$ is normal unit vector field to $T_{(p,u)} S(M)$ with respect to the  $ g_{\delta ,\sigma}$.

We equip $S(M)$ with the induced metric $i^*g_{\delta , 0 }$, where $i:S(M) \longrightarrow TM$ is the inclusion map and represent its Levi-Civita connection by $\tilde{\nabla}$. We now deduce that the tension field of a unit vector field $X$ from $(M,g)$ to $ (S(M),i^* g_{\delta ,0 })$ is the tangent part of $\tau (X)$ with respect to the $g_{\delta , 0}$ i.e.,
\begin{align*}
 \tau _1(X)=  \mathrm{tan} \hspace{1mm} \tau (X),
 \end{align*}
 where $\tau (X)$ is the tension field associated to the map $X:(M,g)\to (TM,g_{\delta ,0})$.

Let $\lbrace V_1,...,V_n \rbrace$ be a local orthonormal basis for the vector fields, defined in a neighborhood of $p \in M$ which $\nabla V_i=0$ at $p\in M$. The tension field of a unit vector field $X$ is the tension field associated to the map $X:(M,g) \to (S(M),i^*g_{\delta ,0})$ defined as follow
\begin{align}
(\tau _1)_q (X) = \sum _{i=1}^n\lbrace \tilde{\nabla}_{X_*(V_i)} X_*(V_i)- X_*(\nabla_{V_i}V_i )\rbrace (X(q)),
\end{align}
for every $q$ in the domain of $V_i$, $i=1,...,n$. 

By using the Gauss formula for the Levi-Civita connections $\tilde{\nabla}$ of $i^*g_{\delta ,0}$ and $\bar{\nabla}$ of $g_{\delta , 0}$, one can get
\begin{align}
&(\tilde{\nabla}_{X_*V_i}X_*V_i)(X(q))\nonumber \\
&=\lbrace \bar{\nabla}_{X_*V_i}X_*V_i-\alpha g_{\delta ,0}(\bar{\nabla}_{X_*V_i}X_*V_i , X^v)X^v\rbrace (X(q)),
\end{align}
where,  $N^{g_{\delta ,0}}(X(q))=\alpha (X(q))X^v_{X(q)}$ is the normal unit vector field to $S(M)$ at $X(q) \in S(M)$ with respect to the metric $g_{\delta ,0}$. So by letting (32) in (31) we will get
\begin{align}
(\tau _1)_q(X)=\lbrace \tau (X)-\alpha \sum _{i=1}^ng_{\delta ,0}(\bar{\nabla}_{X_*V_i}X_*V_i , X^v)X^v\rbrace (X(q)).
\end{align}
Since $\nabla V_i=0$ at $p \in M$ for every $i=1,...,n$ and $g(\nabla _{\nabla _{V_i}V_i}X,X)=0$, one can write
\begin{align}
&(\tau _1)_p(X)= \lbrace \tau (X)\nonumber \\
&-\alpha \sum _{i=1}^n g_{\delta ,0}(\bar{\nabla}_{X_*V_i}X_*V_i  -(\nabla _{V_i}V_i)^h-(\nabla _{\nabla _{V_i}V_i}X)^v, X^v)X^v\rbrace (X(p))
\end{align}
and so we have the following formula for the $(\tau _1)_p(X)$
\begin{align}\label{1025}
(\tau _1)_p(X)=\tau _p(X)-g_{\delta ,0}(\tau _p(X) , N_{(p,X(p))})N_{(p,X(p))}.
\end{align} 
(\ref{1025}) shows that 
$$\tau _1(X)=  \mathrm{tan} \hspace{1mm} \tau (X)  $$
One can now calculate $(\tau _1)_p(X)$. Indeed, by substituting  $\sigma =0$ in (\ref{24}) gives us
\begin{align}
\tau _p (X) =&\lbrace  \frac{1}{\alpha}\lbrace tr_g ((\nabla _{\LargerCdot}X)^v(\alpha )\LargerCdot) +\frac{1}{\alpha}tr_g R(X,\nabla _{\LargerCdot}X){\LargerCdot}
\nonumber \\
&+(1-\frac{n\alpha}{2}+\frac{1}{2\alpha}||\nabla X||^2)X_1 \rbrace ^h (X(p)) \nonumber \\
& + \lbrace \alpha \lbrace \frac{1}{\alpha} \Delta _gX-\frac{1}{\alpha}\nabla _{X_1}X- \frac{1}{\alpha ^2}tr_g((\nabla _{\LargerCdot} X)^v(\alpha)\nabla _{\LargerCdot}X)
\nonumber \\
&+(\frac{1}{2\alpha ^3}||\nabla X||^2 - \frac{n}{2\alpha})X_2 \rbrace ^v \rbrace  (X(p)) .\label{30}
\end{align}
Taking into account (\ref{30}) and $N^{g_{\delta ,0}}(p,X(p))=\sqrt{\alpha}X^v(X(p))$ in (\ref{1025}) and using the definition of $g_{\delta ,0}$ gives us
\begin{align*}
(\tau _1)_p (X)&= \frac{1}{\alpha}\lbrace (1-\frac{n\alpha}{2}+\frac{1}{2\alpha}||\nabla X||^2)X_1 + tr_g((\nabla _{\LargerCdot}X)^v(\alpha){\LargerCdot}
\nonumber \\
&+\frac{1}{\alpha}tr_gR(X,\nabla _{\LargerCdot}X){\LargerCdot} \rbrace ^h  (X(p)) 
\nonumber \\
&+\alpha \lbrace \frac{-1}{\alpha} \nabla _{X_1}X -\frac{1}{\alpha ^2}tr_g((\nabla _{\LargerCdot}X)^v(\alpha)\nabla _{\LargerCdot}X) 
\nonumber \\
&+ \frac{1}{\alpha}\Delta _gX +(\frac{1}{2\alpha ^3}||\nabla X||^2 -\frac{n}{2\alpha} )X_2 -g(-\frac{1}{\alpha}\nabla _{X_1}X
\nonumber \\
&-\frac{1}{\alpha ^2}tr_g ((\nabla _{\LargerCdot}X)^v(\alpha)\nabla _{\LargerCdot}X)+ \frac{1}{\alpha}\Delta _gX
\nonumber \\
&+(\frac{1}{2\alpha ^3}||\nabla X||^2 -\frac{n}{2\alpha})X_2,X)X   \rbrace ^v (X(p)).
\end{align*}
Using the fact that $g(\nabla _{ V_i}X , X)=0$ for every $i=1,...,n$, we get the following expression for the $\tau _1 (X)$.
\begin{align}\label{1026}
(\tau _1)_p (X)&=\frac{1}{\alpha}\lbrace (1-\frac{n\alpha}{2}+\frac{1}{2\alpha}||\nabla X||^2)X_1 + tr_g((\nabla _{\LargerCdot}X)^v(\alpha){\LargerCdot}
\nonumber \\
&+\frac{1}{\alpha}tr_gR(X,\nabla _{\LargerCdot}X){\LargerCdot} \rbrace ^h (X(p)) 
\nonumber \\
&+\alpha \lbrace \frac{-1}{\alpha} \nabla _{X_1}X -\frac{1}{\alpha ^2}tr_g((\nabla _{\LargerCdot}X)^v(\alpha)\nabla _{\LargerCdot}X) 
\nonumber \\
&+ \frac{1}{\alpha}\Delta _gX +(\frac{1}{2\alpha ^3}||\nabla X||^2 -\frac{n}{2\alpha} )X_2-[\frac{1}{\alpha}g(\Delta _gX,X)
\nonumber \\
&+(\frac{1}{2\alpha ^3}||\nabla X||^2-\frac{n}{2\alpha})g(X_2,X)]X\rbrace ^v(X(p))
\end{align}

\begin{remark}\label{2002}
Since the condition $\tau _1 (X ) = 0$ has a tensorial character, as usual we can assume it as a definition of harmonic maps even when $M$ is not compact.
\end{remark}
The condition $\tau _1 (X)=0$ for the special vector fields can be reduced to a simple equation. Specially, for a parallel unit vector field $X$, we have the following corollary
\begin{corollary}\label{1070}
Let $(S(M),i^*g_{\delta ,0})$ be the unit tangent bundle equipped with the induced metric $i^*g_{\delta ,0}$ by the inclusion map $i:S(M) \longrightarrow TM$ for a compact Riemannian manifold $(M, g)$. Then, a map $X:(M,g)\longrightarrow (S(M),i^*g_{\delta ,0})$ defined by a parallel unit vector field $X$ on $M$ is a harmonic map if and only if
\begin{align}\label{1071}
(1-\frac{n\alpha}{2})X_1=0,
\end{align}
and
\begin{align}\label{1072}
X_2=||X_2||X.
\end{align}
\end{corollary}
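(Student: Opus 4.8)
The plan is to specialize the general tension-field formula (\ref{1026}) for $(\tau_1)_p(X)$ to a parallel unit vector field and then split the result into its horizontal and vertical parts. Since $X$ is parallel, $\nabla X=0$, and the first thing I would do is record all the consequences this has for the terms of (\ref{1026}): $\|\nabla X\|^2=0$, every factor $\nabla_{\LargerCdot}X$ and $(\nabla_{\LargerCdot}X)^v(\alpha)$ vanishes (killing all the trace terms), $\nabla_{X_1}X=0$, and the rough Laplacian satisfies $\Delta_gX=0$, hence also $g(\Delta_gX,X)=0$. Substituting these into (\ref{1026}) collapses the horizontal bracket to $\tfrac{1}{\alpha}(1-\tfrac{n\alpha}{2})X_1$ and the vertical bracket to $-\tfrac{n}{2}\{X_2-g(X_2,X)X\}$, giving
\begin{align*}
(\tau_1)_p(X)=\Big\{\tfrac{1}{\alpha}\Big(1-\tfrac{n\alpha}{2}\Big)X_1\Big\}^{h}(X(p))-\tfrac{n}{2}\{X_2-g(X_2,X)X\}^{v}(X(p)).
\end{align*}

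Next I would invoke the decomposition $T_{X(p)}S(M)=\mathcal{H}\oplus\bar{\mathcal{V}}$. The vertical term belongs to $\bar{\mathcal{V}}$ because $g(X_2-g(X_2,X)X,X)=0$ (using $g(X,X)=1$), while the horizontal term lies in $\mathcal{H}$; since these subspaces meet only in $0$, harmonicity $(\tau_1)_p(X)=0$ is equivalent to the simultaneous vanishing of the two parts. As $\alpha>0$ by Definition \ref{1001}, the horizontal part vanishes exactly when $(1-\tfrac{n\alpha}{2})X_1=0$, which is (\ref{1071}). The vertical part vanishes exactly when $X_2-g(X_2,X)X=0$, i.e. when $X_2$ is collinear with the unit field $X$.

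It remains to recast the collinearity relation $X_2=g(X_2,X)X$ as (\ref{1072}). Writing $X_2=cX$ with $c=g(X_2,X)$ and using $g(X,X)=1$ yields $\|X_2\|=|c|$, so that $X_2=\|X_2\|X$ precisely when $c\ge 0$, both forms expressing that $X_2$ points along $X$. \emph{I expect this last sign bookkeeping to be the only delicate point}: to confirm $c\ge 0$ I would compute $X_2=K\big((\bar{\nabla}\delta)\circ X\big)$ directly, observing that $g(X_2,X)=\tfrac{1}{\delta}X^{v}(\delta)$ is $\tfrac1\delta$ times the radial fibre-derivative of $\delta$ along $X$, and then use the sign convention on $\delta$ that is built into the definition of $g_{\delta,0}$ (alternatively one keeps the intrinsic form $X_2=g(X_2,X)X$, which is literally what vanishing of the vertical tension asserts). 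Assembling the horizontal condition (\ref{1071}) with this vertical condition completes the equivalence.
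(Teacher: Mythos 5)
Your proof is correct and follows the route the paper itself intends: Corollary \ref{1070} is stated immediately after formula (\ref{1026}) with no separate proof, and the intended argument is exactly your specialization of (\ref{1026}) to $\nabla X=0$ followed by the splitting into the horizontal part and the $\bar{\mathcal{V}}$-part. Your reduction of the horizontal bracket to $\tfrac{1}{\alpha}(1-\tfrac{n\alpha}{2})X_1$ and of the vertical bracket to $-\tfrac{n}{2}\{X_2-g(X_2,X)X\}$ checks out, and the observation that the vertical term lies in $\bar{\mathcal{V}}$ while the horizontal term lies in $\mathcal{H}$, so both must vanish separately, is the right justification.

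Two small remarks on your final paragraph. First, a notational slip: in this paper $X_2=K\big((\bar{\nabla}\alpha)\circ X\big)$, not $K\big((\bar{\nabla}\delta)\circ X\big)$; correspondingly $g(X_2,X)=\tfrac{1}{\delta}X^{v}(\alpha)$. Second, the sign issue you flag is genuine and is not something you can resolve in your favour: what the vanishing of the vertical tension actually gives is the collinearity condition $X_2=g(X_2,X)X$, and $g(X_2,X)$ need not be nonnegative --- indeed the paper's own computation in Corollary \ref{t1} produces $X_2=-\tfrac{k}{\lambda}X$, so for $k>0$ one has $X_2=-\|X_2\|X$. Thus (\ref{1072}) as literally printed should be read as ``$X_2$ is a multiple of $X$,'' i.e.\ $X_2=g(X_2,X)X$, which is exactly the form your argument delivers; keeping that intrinsic form (as you suggest in your parenthetical) is the correct resolution, rather than trying to prove $g(X_2,X)\ge 0$.
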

\subsection{Variations through unit vector fields and harmonic unit vector fields}
In this section, we will give a  definition for harmonic unit vector fields analogous to a difinition for them with respect to the Sasaki and $g$-natural metrics. Then the necessary and suficient conditions for a unit vector field to be a harmonic unit vector field will be achieved.

 Let $(M,g)$ be compact Riemannian manifold and let
\begin{align*}
\begin{array}{ccc}
E:C^{\infty }((M,g), (S(M),i^*g_{\delta ,0})	& \longrightarrow & \mathbb{R}^+ \\ 
f	& \longmapsto & \frac{1}{2}\int _M ||df||^2 d\text{vol}(g),
\end{array}
\end{align*}
 
be the Dirichlet energy functional where $||df||^2=tr_g(f^*(i^*g_{\delta ,0}))$. It is a well-known fact in the harmonic theory that a map $f:(M,g) \longrightarrow (S(M),i^*g_{\delta ,0})$ is a critical point of $E$ if and only if the tension field $\tau _1$ associated to the map $f$ vanishes identically. According to the first variation formula, 
\begin{align} \label{1028}
\frac{d}{dt}|_{t=0}E(U_t)=-\int _M i^*g_{\delta ,0}(\mathcal{V},\tau _1(f))d\text{vol}(g).
\end{align}
 Where $U_t$ is a variation along $f$ for $|t| <\varepsilon$ which $U_0(x)=f(x)$ and $U_t(x) \in S(M)$ for every $x\in M$ and $\mathcal{V}(x)=\frac{d}{dt}|_{t=0} \lbrace t\longmapsto U_t (x)\rbrace$ is the variation vector field. 
 
 Now, let $X$ be a unit vector field on $M$ and $\mathcal{U}:M \times (-\varepsilon , \varepsilon) \longrightarrow S(M)$ be a smooth 1-parameter variation of $X$ through unit vecter fields i.e., $U_t \in \Gamma (S(M))$  for any $|t| < \varepsilon $ where $U_t(x)=\mathcal{U}(x,t)$, $x \in M$. It is proved \cite{dragomir} that the variation vector field $\mathcal{V}$ associated to this variation is of the form $\mathcal{V}(x)=V_{X(x)}^v$ where $V$ is a vector field on $M$ which is perpendicular to $X$ i.e., $g(X,V)=0$. One can now state the following proposition
 \begin{proposition}\label{1041}
Let $(S(M),i^*g_{\delta , 0})$ be the unit tangent bundle of a compact Riemannian manifold $(M, g)$ and $X$ be a unit vector field on $M$. Let $\mathcal{U}:M \times (-\varepsilon , \varepsilon ) \longrightarrow S(M)$ be a smooth 1-parameter variation of $X$ through unit vecter fields i.e., $U_t \in \Gamma (S(M))$  for any $|t| < \varepsilon $ where $U_t(x)=\mathcal{U}(x,t)$, $x \in M$. Then 
\begin{align}
\frac{d}{dt} \lbrace E(U_t) \rbrace \mid _{t=0}=- \int _M g_{\delta ,0} (V^v,\tau _1 (X)) \hspace{1mm} d\text{vol}(g),\label{32}
\end{align}
where $V^v$ is the variation vector field associated to the stated variation and $g(V,X)=0$.
\end{proposition}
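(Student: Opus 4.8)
The plan is to derive (\ref{32}) directly from the general first variation formula (\ref{1028}), which is already available for maps $f \in C^\infty((M,g),(S(M),i^*g_{\delta,0}))$. The only two ingredients I need to combine are the explicit form of the variation vector field for a variation through unit vector fields and the compatibility of the induced metric $i^*g_{\delta,0}$ with the ambient metric $g_{\delta,0}$ on vectors that are tangent to $S(M)$.

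First I would specialize (\ref{1028}) to the map $f=X$ and to the given variation $\mathcal{U}$, which yields
\[
\frac{d}{dt}|_{t=0}E(U_t) = -\int_M i^*g_{\delta,0}(\mathcal{V},\tau_1(X))\,d\text{vol}(g).
\]
Next I would invoke the cited characterization from \cite{dragomir}, namely that the variation vector field of a variation through unit vector fields has the form $\mathcal{V}(x)=V^v_{X(x)}$ with $g(V,X)=0$, and substitute it into the integrand.

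The key step is then to replace $i^*g_{\delta,0}$ by $g_{\delta,0}$. Because $g(V,X)=0$, the vertical lift $V^v$ lies in the sub-bundle $\bar{\mathcal{V}}$ described earlier and is therefore tangent to $S(M)$; likewise $\tau_1(X)=\mathrm{tan}\,\tau(X)$ is tangent to $S(M)$ by its very construction. Since the induced metric is by definition the restriction of $g_{\delta,0}$ to $TS(M)$, the two inner products agree on this pair of vectors, so that $i^*g_{\delta,0}(V^v,\tau_1(X))=g_{\delta,0}(V^v,\tau_1(X))$, and (\ref{32}) follows at once.

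I do not anticipate a serious obstacle: the proposition is essentially a transcription of the standard first variation formula into the notation adapted to $g_{\delta,0}$, and the only point requiring genuine care is checking that both $V^v$ and $\tau_1(X)$ are tangent to $S(M)$, which legitimizes the passage from the induced to the ambient metric. The appeal to \cite{dragomir} for the form of $\mathcal{V}$ is what actually encodes the unit-length constraint $g(V,X)=0$, so once that is granted the remaining argument is purely formal.
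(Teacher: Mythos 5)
Your proposal is correct and follows essentially the same route as the paper: specialize the first variation formula (\ref{1028}) to the given variation and invoke the cited fact that the variation vector field has the form $V^v_{X(x)}$ with $g(V,X)=0$. The only difference is that you explicitly justify replacing $i^*g_{\delta,0}$ by $g_{\delta,0}$ via tangency of $V^v$ and $\tau_1(X)$ to $S(M)$, a detail the paper leaves implicit.
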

\begin{proof}
Since, a variation through a unit vector field is a special variation among all of the variations, by using the first variation formula (\ref{1028}) and the fact that the variation vector field $\mathcal{V}$ is of the form $\mathcal{V}(x)=V_{X(x)}^v$ for every $x \in M$ and some vector field $V$ on $M$ which is perpendicular to $X$, the proposition will be proved.
\end{proof}
\begin{remark}\label{1040} \cite{dragomir}
Let $X$ be a unit vector field on $M$ and let 
\begin{align*}
\mathcal{S}=\lbrace  V\in \Gamma (TM)| g(V,X)=0 \rbrace .
\end{align*}
  Then for every $V \in \mathcal{S}$ there exists a smooth variation along $X$ through unit vector fields which its variation vector field is $V^v$. Indeed let $V$ be an arbitrary element of $\mathcal{S}$ and let us set 
\begin{align}
W_t=X+tV , \hspace{1cm}U_t=||W_t||^{-1}W_t ,\hspace{0.5cm}|t|<\epsilon .
\end{align}
It is not hard to check that $U_t$ is a variation along $X$ through unit vector fields with variation vector field $V^v$.
\end{remark}
The following definition is analogous to the definition of harmonic unit vector fields with respect to the Sasaki metric and $g$-natural metrics.
\begin{definition}\label{1029}
Let $(M,g)$ be a compact Riemannian manifold and $(S(M)$\\$,i^*g_{\delta ,0})$ be its unit tangent bundle equipped with the Riemannian metric $i^*g_{\delta ,0}$. A unit vector field $X \in \Gamma (S(M))$ is called a harmonic unit vector field if and only if the equation  (\ref{32}) vanishes for all vector field  $V \in \mathcal{S}$.
\end{definition}

The following theorem gives the necessary and sufficient conditions for a unit vector field to be a unit harmonic vector field.

\begin{theorem}\label{1050}
Let $(M,g)$ be a compact Riemannian manifold and $X$ be a unit vector field on $M$. Then, $X:(M,g) \longrightarrow (S(M), i^*g_{\delta ,0})$ is a harmonic unit vector field if and only if
\begin{align}
\Delta _gX&=[||\nabla X||^2+(\frac{1}{2\alpha ^2}||\nabla X||^2-\frac{n}{2})g(X_2,X)]X
\nonumber \\
&+(\frac{n}{2}-\frac{1}{2\alpha ^2}||\nabla X||^2)X_2+\nabla _{X_1}X
\nonumber \\
&+\frac{1}{\alpha}tr_g ((\nabla _.X)^v(\alpha)\nabla _.X),\label{33}
\end{align}
where $X_1$ and $X_2$ are the vector fields defined as before.
\end{theorem}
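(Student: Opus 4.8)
The plan is to argue directly from the definition of a harmonic unit vector field (Definition \ref{1029}) together with the first variation formula \eqref{32} of Proposition \ref{1041}: $X$ is harmonic if and only if $\int _M g_{\delta ,0}(V^v, \tau _1(X))\, d\text{vol}(g) = 0$ for every $V \in \mathcal{S}$, i.e. for every vector field with $g(V,X)=0$. First I would evaluate the integrand using the explicit tension field \eqref{1026}. Because we work with $g_{\delta ,0}$, i.e. $\sigma = 0$, the mixed term vanishes, $g_{\delta ,0}(X^h,Y^v)=0$, so only the vertical part of $\tau _1(X)$ survives the pairing against $V^v$. Writing that vertical part as $\alpha W^v$, the relations \eqref{7} and \eqref{8} give $g_{\delta ,0}(V^v,\alpha W^v)=\alpha \delta\, g(V,W)$, and since $\alpha \delta - \sigma ^2 = 1$ with $\sigma = 0$ forces $\alpha \delta = 1$, the pairing collapses to the clean expression $g(V,W)$.

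Thus the harmonicity condition reduces to $\int _M g(V,W)\, d\text{vol}(g)=0$ for all $V$ perpendicular to $X$, where $W$ is the bracketed vertical field of \eqref{1026}. The next step is a standard localization (the fundamental lemma of the calculus of variations, applied to smooth sections of the orthogonal complement bundle $X^\perp$): since $V$ ranges over \emph{all} smooth vector fields orthogonal to $X$, vanishing of the integral for every such $V$ forces the $X^\perp$-component of $W$ to vanish pointwise, that is $W - g(W,X)X = 0$. Inspecting \eqref{1026}, the field $W$ already has the shape $A - g(A,X)X$, where $A = \frac{1}{\alpha}\Delta _gX - \frac{1}{\alpha}\nabla _{X_1}X - \frac{1}{\alpha ^2}tr_g((\nabla _\cdot X)^v(\alpha)\nabla _\cdot X) + (\frac{1}{2\alpha ^3}||\nabla X||^2 - \frac{n}{2\alpha})X_2$, so the projection condition is simply $A = g(A,X)X$.

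It then remains to rewrite $A = g(A,X)X$ in the stated form \eqref{33}, and here two elementary facts for a unit vector field do all the work. Differentiating $g(X,X)=1$ gives $g(\nabla _Y X, X)=0$ for every $Y$, which annihilates both $g(\nabla _{X_1}X,X)$ and the trace term in $g(A,X)$, leaving $g(A,X)=\frac{1}{\alpha}g(\Delta _gX,X)+(\frac{1}{2\alpha ^3}||\nabla X||^2-\frac{n}{2\alpha})g(X_2,X)$. A one-line computation from the same identity $g(\nabla _{V_i}X,X)=0$ yields $g(\Delta _gX,X)=||\nabla X||^2$. Substituting these, multiplying $A=g(A,X)X$ through by $\alpha$, and solving for $\Delta _gX$ produces \eqref{33} after regrouping the $X$ and $X_2$ coefficients.

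Since the tension field \eqref{1026} is already available, the argument is essentially bookkeeping and there is no deep obstacle. The \textbf{main subtlety} is the reduction step: one must recognize that the horizontal part of $\tau _1(X)$ contributes nothing to the variational pairing precisely because $\sigma = 0$, and that the surviving vertical field $W$ is already tangent to $S(M)$ (orthogonal to $X$) by its very construction in \eqref{1026}. Consequently the localization must be phrased for sections of $X^\perp$ rather than of $TM$, so that it correctly extracts the condition $A = g(A,X)X$ and not the stronger, false, condition $A = 0$.
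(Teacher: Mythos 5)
Your proposal is correct and follows essentially the same route as the paper: both reduce harmonicity, via the first variation formula and the orthogonality of $\mathcal{H}$ and $\mathcal{V}$ for $g_{\delta,0}$, to the pointwise vanishing of the vertical part of $\tau_1(X)$ (the paper by decomposing $\tau_1(X)$ and killing the perpendicular and $X^v$-components separately, you by noting the vertical field in \eqref{1026} is already of the form $A-g(A,X)X$), and then both unpack this with $g(\nabla_Y X,X)=0$ and $g(\Delta_g X,X)=\|\nabla X\|^2$ to obtain \eqref{33}. The only cosmetic difference is that your argument is a single if-and-only-if chain, whereas the paper treats the two implications separately.
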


\begin{proof}
($\Longrightarrow $) Let $X$ be a harmonic vector field, we show that (\ref{33}) holds. Suppose $\tau _1(X)$ is the tension field associated to the map $X:(M,g) \longrightarrow (S(M),g_{\delta ,0})$ and let
\begin{align}
(\tau _1)_p(X)=\zeta X^h_{X(p)} + \lambda X^v_{X(p)} +V^v_{X(p)} + W^h_{X(p)}, \hspace{1cm}\forall \: p \in M, \label{34}
\end{align}
where $V$ and $W$ are perpendicular vector fields to $X$ and $\lambda ,  \zeta $ are smooth real functions on $X(M) \subseteq S(M)$. We show that $V=0$ and $\lambda =0$. From (\ref{34}), we have
\begin{align}
||V^v_{X(p)}||^2=g_{\delta,0}( (\tau _1)_p(X),V^v_{X(p)}), \qquad \forall \: p\in M.\label{35}
\end{align}
According to the Remark \ref{1040} and the Proposition \ref{1041} and the definition \ref{1029}, $ \int_M ||V^v||^2 d\text{vol}(g) = \int_M g_{\delta ,0}(\tau _1 (X) ,V^v) d\text{vol}(g)=0$. This shows that $V=0$, and the equation
\begin{align}
 (\tau _1)_p(X)=\tau _p(X) - \alpha(X(p))g_{\delta ,0}(\tau _p (X) , X^v_{X(p)}) X^v_{X(p)},
\end{align}
 shows that $\tau _1(X)$ hasn't any component in direction of $X^v$, i.e., $\lambda =0$. From (\ref{34}) and $V=0$ and $\lambda =0$, one can get $K(\tau _1 (X))= 0$. On the other hand, from (\ref{1026}) we have
\begin{align}
K(\tau _1 (X))&=\alpha \lbrace \frac{-1}{\alpha} \nabla _{X_1}X -\frac{1}{\alpha ^2}tr_g((\nabla _{\LargerCdot}X)^v(\alpha)\nabla _{\LargerCdot}X) 
\nonumber \\
&+ \frac{1}{\alpha}\Delta _gX +(\frac{1}{2\alpha ^3}||\nabla X||^2 -\frac{n}{2\alpha} )X_2-[\frac{1}{\alpha}g(\Delta _gX,X)
\nonumber \\
&+(\frac{1}{2\alpha ^3}||\nabla X||^2-\frac{n}{2\alpha})g(X_2,X)]X\rbrace.\label{36}
\end{align}
Using $g( \Delta _gX,X)=\frac{1}{2}\Delta (||X||^2) + ||\nabla X||^2=||\nabla X||^2$ and $K(\tau _1 (X))=0$, we get
\begin{align}
\Delta _gX&=[||\nabla X||^2+(\frac{1}{2\alpha ^2}||\nabla X||^2-\frac{n}{2})g(X_2,X)]X
\nonumber \\
&+(\frac{n}{2}-\frac{1}{2\alpha ^2}||\nabla X||^2)X_2+\nabla _{X_1}X
\nonumber \\
&+\frac{1}{\alpha}tr_g ((\nabla _{\LargerCdot}X)^v(\alpha)\nabla _{\LargerCdot}X),
\end{align}
($\Longleftarrow$) Let (\ref{33}) holds, we show that $X$ is a harmonic unit vector field. Substituting (\ref{33}) in (\ref{36}) gives us, $K(\tau _1 (X))=0$, i.e., the vertical part of $\tau _1(X)$ is zero. Proposition \ref{1041} with $K(\tau _1 (X))=0$ give us
\begin{align}
\frac{d}{dt} \lbrace E(\mathcal{U}_t) \rbrace \mid _{t=0}=- \int _M g_{\delta , 0} (V^v,\tau _1 (X)) \hspace{1mm} d\text{vol}(g)=0,\label{39}
\end{align}
where $\mathcal{U}_t$ is an arbitrary variation along $X$ through unit vector fields and $\mathcal{V}=V^v$ is its variation vector field. This shows that $X$ is a critical point of $E$ and the proof is completed. Note that the vertical and the horizontal sub-bundles are perpendicular to each other with respect to $g_{\delta , 0}$.
\end{proof}

Note that the theorem \ref{1050} shows that $X:(M,g)\to (S(M),i^*g_{\delta ,0})$ is a harmonic unit vector field if and only if the vertical part of $\tau _1(X)$ is zero.
\begin{remark}
Let $(M,g)$ be a compact Riemannian manifold and let $(S(M),i^*g_s)$ be its unit tangent bundle equipped with the induced Sasaki metric by inclusion map $i:S(M)\longrightarrow TM$. It is proved \cite{wood} that a unit vector field $X:(M,g)\longrightarrow (S(M),i^*g_s)$ is a harmonic vector field if and only if $\Delta _g X=||\nabla X||^2X$. 
\end{remark}
\begin{corollary}\label{t1}
If we suppose thet $(M,g)$ is a Riemannian manifold of constant sectional curvature $k$ and  $g_{\delta,0}$ is defined by $\alpha=\delta ^{-1}=\sqrt{2kE+b}$ and $\sigma =0$ (mappings defined in the equation (\ref{90001})) then  $X:(M,g) \to (S(M), i^*g_{\delta ,0})$ is a harmonic unit vector field if and only if
\begin{align}\label{t2}
\Delta _gX&=||\nabla X||^2X,
\end{align}
where $E(u)=\frac{1}{2}g(u,u)$.

\begin{proof}
Let $(x^1,...,x^n)$ be a locally coordinate system on $M$ such that the metric $g$ is of the form $g=\lambda ^2\sum _{i=1}^ndx^i \otimes dx^i$ with respect to this coordinate system, where $\lambda$ is the conformal factor. Let
$$X_i=\frac{1}{\lambda}\frac{\partial}{\partial x^i},\qquad  i=1,...,n,$$
be the locally orthonormal vector fields on $M$. Furthermore, suppose 
$$\theta ^i= \lambda dx^i\,, i=1,...,n,$$
be thier dual $1$-forms. Suppose  $(x^1,...,x^n,y^1,...,y^n)$ is the associated locally coordinate system on $TM$ and   $\xi ^i=\lambda dy^i\,,i=1,...,n$ are locally defined $1$-forms on $TM$. From \cite{aguilar}, we know that 
$$dE=\sum _{i=1}^n \theta ^i \xi ^i,$$
and so for the given $\alpha$ one can deduce that 
$$d\alpha =-\frac{ \lambda k}{\alpha} \sum _{i=1}^n \theta ^i  dy^i,$$
and this implies that the gradient vector field of $\alpha$ with respect to the given metric $g_{\delta ,0}$ is given by
\begin{align}\label{e1}
(\bar{\nabla}\alpha )V=-\frac{k}{\lambda (\pi oV)} V_V^v ,
\end{align}
for all vectors $V \in TM$. The equation (\ref{e1}) shows that the vector fields $X_1,X_2$ stated in the theorem \ref{1050} are $X_1=0$ and $X_2(p)=-\frac{k}{\lambda (p)}X(p)$ for all $p\in M$. Since $X$ is a unit vector field and $(\bar{\nabla}\alpha )X(p)=-\frac{k}{\lambda (p)}X_{X(p)}^v$ then 
$$(\nabla _ YX)^v(\alpha)=0,$$
for all $Y\in TM$. Now, using the stated properties the equation (\ref{33}) can be easily reduced to this equation:
$$\Delta _gX =||\nabla X||^2X.$$
\end{proof}
\end{corollary}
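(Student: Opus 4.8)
The plan is to obtain Corollary~\ref{t1} as a direct specialization of Theorem~\ref{1050}. That theorem characterizes harmonic unit vector fields by the single equation (\ref{33}), so it suffices to show that, for the particular choice $\alpha=\delta^{-1}=\sqrt{2kE+b}$ and $\sigma=0$, the auxiliary vector fields $X_1,X_2$ and the trace term appearing in (\ref{33}) degenerate in such a way that the equation collapses to $\Delta_g X=\|\nabla X\|^2 X$. The whole argument is therefore a computation of the gradient $\bar\nabla\alpha$ with respect to $g_{\delta,0}$, followed by an algebraic cancellation.

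First I would compute $\bar\nabla\alpha$. Since $\alpha=\sqrt{2kE+b}$ with $E=\tfrac12 g(u,u)$, one has $d\alpha=\tfrac{k}{\alpha}\,dE$, and $dE$ is the purely vertical $1$-form $A\mapsto g(u,K A)$ (it annihilates horizontal vectors, because the fibre norm is parallel along horizontal lifts). Raising the index with $g_{\delta,0}$ — using the block relations (\ref{6})--(\ref{8}) with $\sigma=0$, in particular $g_{\delta,0}(Y^v,Z^v)=\delta\, g(Y,Z)$ and $\alpha\delta=1$ — shows that $\bar\nabla\alpha$ is itself purely vertical and is at each $u\in TM$ a scalar multiple of the vertical lift $u^v$. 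The precise scalar is immaterial for what follows; what matters is verticality together with proportionality to the position field $u$.

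These two features dispose of almost every term in (\ref{33}). Verticality of $\bar\nabla\alpha$ forces $X_1=\pi_*((\bar\nabla\alpha)\circ X)=0$, hence $\nabla_{X_1}X=0$; and proportionality to $u$ gives $X_2=K((\bar\nabla\alpha)\circ X)=c\,X$ for a scalar function $c$, so that, $X$ being a unit field, $g(X_2,X)=c$ and $g(X_2,X)\,X=X_2$. For the trace term I would note that $(\nabla_{V}X)^v(\alpha)=d\alpha((\nabla_V X)^v)=\tfrac{k}{\alpha}\,g(X,\nabla_V X)$, which vanishes because $g(X,X)=1$ yields $g(X,\nabla_V X)=0$; hence $\tfrac{1}{\alpha}\mathrm{tr}_g((\nabla_\cdot X)^v(\alpha)\nabla_\cdot X)=0$.

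Substituting $X_1=0$, the vanishing trace, and $X_2=cX$ into (\ref{33}), the only surviving curvature-weighted terms are $(\tfrac{1}{2\alpha^2}\|\nabla X\|^2-\tfrac n2)g(X_2,X)\,X$ and $(\tfrac n2-\tfrac{1}{2\alpha^2}\|\nabla X\|^2)\,X_2$; since $g(X_2,X)\,X=X_2$ these are exact negatives of one another and cancel, leaving $\Delta_g X=\|\nabla X\|^2 X$, as claimed, and reversing the steps gives the converse. I expect the only delicate point to be the gradient computation: correctly decomposing $dE$ into its vertical part and raising the index in the anisotropic metric $g_{\delta,0}$ so as to confirm that $\bar\nabla\alpha$ is vertical and proportional to $u$. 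Once that structural fact is secured, the reduction of the harmonicity criterion is a one-line cancellation.
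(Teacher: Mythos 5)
Your proposal is correct and follows essentially the same route as the paper: specialize Theorem \ref{1050}, compute $\bar{\nabla}\alpha$ to see it is vertical and proportional to $u^v$ (so $X_1=0$ and $X_2\propto X$), observe that the trace term dies because $g(X,\nabla_V X)=0$, and let the two $X_2$-terms cancel. The only cosmetic difference is that you compute the gradient invariantly via $dE(A)=g(u,KA)$, whereas the paper does the same computation in conformally flat coordinates; your observation that the precise proportionality scalar is immaterial is accurate.
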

The following example shows that the Hopf vector fields on $S^3(1)$ are harmonic unit vector fields when $g_{\delta ,0}$ is the given metric in the corollary \ref{t1}.
\begin{example}
Let $(S^3,g)$ be the standard unit $3$-sphere and $g_{\delta , 0}$ be the introduced metric in the last corollary with $0\leq b$ and $k=1$. Moreover, let $J_1,J_2,J_3:T\mathbb{R}^4\to T\mathbb{R}^4$ be three complex structures on the 4-dimensional Euclidean space $(\mathbb{R}^4 , \langle .,. \rangle) $ defined by
\begin{align*}
&J_1(v_1,v_2,v_3,v_4)=(-v_2,v_1,-v_4,v_3),\\
&J_2(v_1,v_2,v_3,v_4)=(v_3,-v_4,-v_1,v_2),\\
&J_3(v_1,v_2,v_3,v_4)=(v_4,v_3,-v_2,-v_1),
\end{align*}
where $ (v_1,v_2,v_3,v_4)$ is a tangent vector to $\mathbb{R}^4$.

 It is simple to check that $(\mathbb{R}^4,J_i , \langle .,. \rangle)$ for $i=1,2,3$ are kahler manifolds. Moreover, suppose $N(p)=(p_1,p_2,p_3,p_4)$ for $p=(p_1,p_2,p_3,p_4) \in \mathbb{R}^4$  is the position vector field on $\mathbb{R}^4$. Let $W_1(p)=J_1N(p), W_2(p)=J_2N(p)$ and $W_3(p)=J_3N(p)$ be tangent vector fields to $S^3$.

We shall show that $W_1:(S^3,g) \longrightarrow (S(S^3),i^*g_{\delta ,0})$ is a  harmonic unit vector field, that is, it satisfies the corollary \ref{t1}.

Using the Gauss formula gives us the Levi-Civita connection $\nabla  ^S$ of $g$ as following
\begin{align}
 \nabla  ^S _Z W=\nabla ^E _ZW +g(Z,W)N.\label{46}
\end{align}
If $V$ is a vector field on $S^3$ then from (\ref{46}) and from the fact that $J_1$ is parallel with respect to $\nabla ^E$, one can get
\begin{align}
\nabla  ^S _V W_1&=\nabla ^E _VW_1 +g(V,W_1)N=\nabla ^E _VJ_1N +g(V,W_1)N
\nonumber \\
&=J_1V+g(V,W_1)N.\label{47}
\end{align}
By the definition, the Laplacian of $W_1$ is 
\begin{align*}
- \Delta _g W_1 &= \nabla ^S _{W_1} \nabla ^S _{W_1} W_1 + \nabla ^S _{W_2} \nabla ^S _{W_2} W_1 + \nabla ^S _{W_3} \nabla ^S _{W_3} W_1
 \\
& - \sum _{i=1} ^3 \nabla ^S _{\nabla ^S _{W_i}W_i} W_1 .\label{48}
\end{align*}
Using (\ref{46}) and (\ref{47}) and the fact that ${\nabla ^S _{W_i}W_i}=0$ (because the integral curves of $W_i$ for all $i=1,2,3$ are geodesics of $S^3$) give us
\begin{align*}
- \Delta _g W_1 &= J_1( \nabla ^E _{W_2} W_2 +\nabla ^E _{W_3} W_3).
\end{align*}
With an straight forward calculation, one can show that $\nabla ^E_{W_i} W_i=-N$ for $i=2,3$. So, we have
\begin{align*}
- \Delta _g W_1=-2J_1N=-2W_1.
\end{align*}
 On the other hand from (\ref{47}) we have 
\begin{align*}
||\nabla ^SW_1||^2=\sum _{i=1}^3g(\nabla ^S_{W_i}W_1,\nabla ^S_{W_i}W_1)=2.
\end{align*}
 Therefore, $W_1$ satisfies the equation $\Delta _g W_1=||\nabla W_1||^2W_1$.
\end{example}
\textbf{Acknowledgment}

The authors would like to thank Prof. R. Bryant for his valuable   answers to their questions in Mathoverflow.

\end{document}